\definecolor{codegreen}{rgb}{0,0.6,0}
\definecolor{codegray}{rgb}{0.5,0.5,0.5}
\definecolor{codepurple}{rgb}{0.58,0,0.82}
\definecolor{backcolour}{rgb}{0.95,0.95,0.92}
\lstdefinestyle{list_style}{
  backgroundcolor=\color{backcolour}, commentstyle=\color{codegreen},
  keywordstyle=\color{magenta},
  numberstyle=\tiny\color{codegray},
  stringstyle=\color{codepurple},
  basicstyle=\ttfamily\footnotesize,
  breakatwhitespace=false,         
  breaklines=true,                 
  captionpos=b,                    
  keepspaces=true,                 
  numbers=left,                    
  numbersep=5pt,                  
  showspaces=false,                
  showstringspaces=false,
  showtabs=false,                  
  tabsize=2
}
\newcommand{\xdasharrow}[2][->]{
\tikz[baseline=-\the\dimexpr\fontdimen22\textfont2\relax]{
\node[anchor=south,font=\scriptsize, inner ysep=1.5pt,outer xsep=2.2pt](x){#2};
\draw[shorten <=3.4pt,shorten >=3.4pt,dashed,#1](x.south west)--(x.south east);
}
}
\newcommand{\DEBUG}{}
  \def\rem#1{{\marginpar{\raggedright\scriptsize #1}}}
  \newcommand{\pmr}[1]{\rem{\color{blue}{$\bullet$ #1}}}
  \newcommand{\ppr}[1]{\rem{\color{red}{$\bullet$ #1}}}
  \newcommand{\ppr}[1]{}
  \newcommand{\pmr}[1]{}
\newcommand{\R}{{\mathbb R}}
\newcommand{\E}{{\mathbb E}}
\def\rho{\varrho_1}
\def\rd{\,{\mathrm d}}
\theoremstyle{plain}
\newtheorem{theorem}{Theorem}
\newtheorem{lemma}{Lemma}
\newtheorem{fact}{Fact}
\newtheorem{proposition}{Proposition}
\theoremstyle{definition}
\begin{document}

\title
[Approximation of SDEs under inexact information]
{On the randomized Euler algorithm under inexact information}

\author[M. Baranek]{Marcin Baranek}
\address{AGH University of Krakow,
	Faculty of Applied Mathematics,
	Al. A.~Mickiewicza 30, 30-059 Krak\'ow, Poland}
\email{mbaranek@agh.edu.pl}

\author[A. Ka\l u\.za]{Andrzej Ka\l u\.za}
\address{AGH University of Krakow,
	Faculty of Applied Mathematics,
	Al. A.~Mickiewicza 30, 30-059 Krak\'ow, Poland}
\email{akaluza@agh.edu.pl}

\author[P. M. Morkisz]{Pawe{\l } M. Morkisz}
\address{NVIDIA Corp. and AGH University of Krakow,
	Faculty of Applied Mathematics,
	Al. A.~Mickiewicza 30, 30-059 Krak\'ow, Poland}
\email{morkiszp@agh.edu.pl}

\author[P. Przyby{\l}owicz]{Pawe{\l} Przyby{\l}owicz}
\address{AGH University of Krakow,
Faculty of Applied Mathematics,
Al. A.~Mickiewicza 30, 30-059 Krak\'ow, Poland}
\email{pprzybyl@agh.edu.pl, corresponding author}

\author[M. Sobieraj]{Micha{\l} Sobieraj}
\address{AGH University of Krakow,
	Faculty of Applied Mathematics,
	Al. A.~Mickiewicza 30, 30-059 Krak\'ow, Poland}
\email{sobieraj@agh.edu.pl}
	
\begin{abstract}
This paper focuses on analyzing the error of the randomized Euler algorithm when only noisy information about the coefficients of the underlying stochastic differential equation (SDE) and the driving Wiener process is available. Two classes of disturbed Wiener process are considered, and the dependence of the algorithm's error on the regularity of the disturbing functions is investigated. The paper also presents results from numerical experiments to support the theoretical findings. 
\newline
\newline
\textbf{Key words:} stochastic differential equations, randomized Euler algorithm, inexact information, Wiener process, lower bounds, optimality
\newline
\newline
\textbf{MSC 2010:} 65C30, 68Q25
\end{abstract}
\maketitle

\section{Introduction}
We investigate the strong approximation of solutions of the following SDEs
\begin{equation}
\label{main_equation}
	\left\{ \begin{array}{ll}
	\displaystyle{
	\rd X(t) = a(t,X(t))\rd t + b(t,X(t)) \rd W(t), \ t\in [0,T]},\\
	X(0)=\eta, 
	\end{array} \right.
\end{equation}
where $T >0$, $W$ is an $m$-dimensional Wiener process, 
 and $\eta\in\mathbb{R}^d$. Our analysis is performed under the assumption that only {\it standard noisy information} about $(a,b,W)$ is available. This means that we have access to $a,b,W$ only through its {\it inexact values}  at finite number of discretization points.

Our interest lies in approximating the values of $X(T)$ using the inexact information about the coefficients $(a,b)$ and the driving Wiener process $W$. We consider algorithms that are based on values of $a,b$, and $W$ corrupted by noise. This noise can arise from measurement errors, rounding procedures, etc.  The inspiration of considering such inexact information comes from various sources, such as numerically solving SDEs  on GPUs and understanding of impact of low precision in computations (when switching from {\bf double} to {\bf float} and {\bf half}, see \cite{AKAPhD}), as well as modeling real-world phenomena that are described by SDEs such as energy demand/production forecasting (where exact information is rarely available).

The study of inexact information has been explored in the literature for various problems, including  function approximation and integration (\cite{KaPl90}, \cite{milvic}, \cite{MoPl16}, \cite{Pla14}), approximate solving of ODEs (\cite{KaPr16}) and PDEs (\cite{Wer96}, \cite{Wer97}), see also the related monograph \cite{Pla96}. In the context of stochastic integration and approximation of solution of stochastic differential equations inexact information about the integrands or coefficients of the underlying SDEs has been considered in \cite{AKPMPP}, \cite{PMPP17}, \cite{PMPP19}. However, it is important to note that in \cite{PMPP17} and \cite{PMPP19} the information considered about the process $W$ was exact. We also refer to the article \cite{GSM2022} where noisy information induced by the approximation of normally distributed random variables is considered. However, the computational setting (devoted for weak approximation of SDEs) is different that the one considered in this paper (established in the context of strong approximation of the solution $X$).

In this paper, we mainly extend the proof technique known from \cite{PMPP17} and \cite{PMPP19}. Namely, we  cover the case when also the information about the Wiener process $W$ is inexact. This assumption leads to a significant change in the proof technique. It allows  us  to investigate the error behavior for the randomized Euler scheme under inexact information about the tuple $(a,b,W)$ with precision parameters $\delta_1,\delta_2,\delta_3\in [0,1]$ for $a,b,W$, respectively. (See also, for example, \cite{JenNeuen}, \cite{PMPP14}, \cite{KRWU_0}, \cite{KRWU} where other randomized algorithms for approximation of solutions of ODEs and SDEs have been defined and investigated under exact information.) Roughly speaking, we show that the $L^r(\Omega)$-error of the randomized Euler scheme, that use $O(n)$ noisy evaluations of $(a,b,W)$, is $O(n^{-\min\{\varrho,1/2\}}+\delta_1+\delta_2+\delta_3)$, provided that the corrupting functions for $W$ are sufficiently regular (see Theorem \ref{theorem_error_bounds} (i). In the case of less regular corrupting functions for $W$ (assuming only H\"older continuity) the error might increase due to the presence of informational noise (see Theorem \ref{theorem_error_bounds} (ii)).

The main contributions of this paper are as follows:
\begin{itemize}
    \item Upper error bounds on the randomized Euler algorithm in two classes of corrupting functions for the Wiener process $W$ (Theorem \ref{theorem_error_bounds}),
    \item Lower error bounds and optimality of the randomized Euler algorithm (Theorem \ref{thm_lower_bounds}),
    \item Results of numerical experiments that confirm our theoretical findings (Section 5).
\end{itemize}
The structure of the paper is as follows. Section 2 provides basic notions and definitions, along with a description of the computation model used when dealing with inexact information for drift and diffusion coefficients, as well as for the driving Wiener process. In Section 3, we analyze the upper bounds for the error of the randomized Euler algorithm. Lower bounds and some optimality results are stated in Section 4. Section 5 contains the results of numerical experiments conducted to validate our theoretical findings. Finally, the Appendix provides auxiliary results used in the paper. 
\section{Preliminaries}
We denote by $\mathbb{N}=\{1,2,\ldots\}$. Let  $W = \{W(t)\}_{t\geq 0}$ be a standard $m$-dimensional Wiener process defined on a complete probability space  $(\Omega,\Sigma, \mathbb{P})$. By $\{\Sigma_t\}_{t\geq 0}$ we denote a filtration, satisfying the usual conditions, such that $W$ is a Wiener process with respect to $\{\Sigma_t\}_{t\geq 0}$. We set $\Sigma_{\infty}=\sigma\Bigl(\bigcup_{t\geq 0}\Sigma_t\Bigr)$.
 We denote by $\|\cdot\|$  the Frobenius norm in $\mathbb{R}^m$ or $\mathbb{R}^{d\times m}$ respectively, where we treat a column vector in $\mathbb{R}^m$ as a matrix of size $m\times 1$. For $x\in\mathbb{R}^m$ and $\alpha\in\mathbb{R}$, by $x\cdot\alpha$ or $\alpha\cdot x$ we mean a componentwise scalar-by-vector multiplication and for a matrix $y\in\mathbb{R}^{d\times m}$  by $y\cdot x$ we mean a standard matrix-by-vector multiplication. For a sufficiently smooth function $f:[0,T]\times\mathbb{R}^m\to\mathbb{R}$ we denote by $\partial f(t,y)/\partial y$ its gradient, while by $\partial^2 f(t,y)/\partial y^2$ its Hessian matrix  of size $m\times m$. Moreover, for a  smooth function $f:[0,T]\times\mathbb{R}^m\to\mathbb{R}^m$ we denote by $\partial f(t,y)/\partial y$ its Jacobi matrix of size $m\times m$, computed also with respect to the space variable $y$. For $r\in [2,+\infty)$ by the $L^r(\Omega)$-norm, either for a random vector or a random matrix, we mean
\begin{equation}
 \|Y\|_r := \left( \E\|Y\|^r\right)^{1/r}\qquad\mbox{for}\qquad Y:\Omega \to \mathbb{R}^m \mbox{ or } Y:\Omega \to\mathbb{R}^{d\times m}.
\end{equation}
We also make us of the  following second order differential operator
\begin{equation}
    \mathcal{L}=\frac{\partial}{\partial t}+\frac{1}{2}\sum\limits_{k=1}^m\frac{\partial^2}{\partial y_k^2}.
\end{equation}
We now define classes of drift and diffusion coefficients.
Let $T>0$, $K>0$, $\varrho\in (0,1]$. A function $a:[0,T]\times \mathbb{R}^d \to \mathbb{R}^d$ belongs to $\mathcal{A}_{K}$ if
\begin{itemize}
\item the mapping $a:[0,T]\times\mathbb{R}^d\to\mathbb{R}^d$ is Borel measurable, 
\item for all $ t\in[0,T]$
\begin{equation}
\|a(t,0)\| \leq K,
\end{equation}
\item for all $t\in[0,T]$, $x,y \in \mathbb{R}^d$
\begin{equation} 
	\|a(t,x) - a(t,y)\| \leq K \|x - y\|.
\end{equation}
\end{itemize}
Note that if $a\in\mathcal{A}_{K}$ then for all $(t,y)\in [0,T]\times\mathbb{R}^d$ we have
\begin{equation}
	\|a(t,y)\| \leq K(1+\|y\|).
\end{equation}
A mapping $b:[0,T]\times\mathbb{R}^d\to\mathbb{R}^{d\times m}$ belongs to $\mathcal{B}^{\varrho}_{K}$ if
\begin{itemize}
\item $b$ is bounded in the origin $(0,0)$,
	\begin{equation}
	\|b(0,0)\| \leq K,
	\end{equation}
\item for all $t\in[0,T]$, $x,y \in \mathbb{R}^d$
	\begin{equation}
		\label{lip_b1}
			\|b(t,x) - b(t,y) \| \leq K  \| x-y\|,
	\end{equation}
\item for all $t,s\in[0,T]$, $x \in \mathbb{R}^d$
	\begin{equation}
		\label{lip_b2}
			\|b(t,x) - b(s,x) \| \leq K (1+\|x\|)\cdot |t-s|^{\varrho}.
	\end{equation}	
\end{itemize}
The above conditions imply that for all $(t,y)\in [0,T]\times\mathbb{R}^d$
\begin{equation}
	\label{ling_g_b}
 \|b(t,y)\| \leq\bar K (1+\| y\|),
\end{equation}
where $\bar K=K\cdot\max\{1,T^{\varrho}\}$. We also consider the following class of initial values
\begin{equation}
	\mathcal{J}_K=\{\eta\in\mathbb{R}^d \ | \ \|\eta\|\leq K\}.
\end{equation}
The class of all admissible tuples $(a,b,\eta)$ is defined as
\begin{equation}
    \mathcal{F}(\varrho,K)=\mathcal{A}_K\times\mathcal{B}^{\varrho}_K\times\mathcal{J}_K.
\end{equation}
Let 
\begin{equation}
	\delta_1,\delta_2, \delta_3,\delta_4 \in [0,1]
\end{equation}	
We refer to $\delta_1$, $\delta_2$, $\delta_3$, and $\delta_4$ as to \textit{precision parameters}. 
We now describe what we mean by corrupted values and information about $a, b, W$.

Let us set
\begin{eqnarray*}
	\mathcal{K}^s&=&\{p:[0,T]\times\mathbb{R}^d\to \mathbb{R}^{d\times s} \ | \ p(\cdot,\cdot)-\hbox{Borel measurable}, \\
    && \|p(t,y)\|\leq 1+\|y\| \ \hbox{for all}  \  t\in [0,T], y\in\mathbb{R}^d\},
\end{eqnarray*}
for $s\in\{1,m\}$. The classes $\mathcal{K}^1$, $\mathcal{K}^m$ are nonempty and contain constant functions.  Let
\begin{equation}
	V_c(\gamma)=\{\tilde c \ | \ \exists_{p_c\in\mathcal{K}^s}: \tilde c=c+\gamma\cdot p_c\},
\end{equation}
where $c\in\{a,b\}$, $(\gamma,s)=(\delta_1,1)$ if $c=a$ and $(\gamma,s)=(\delta_2,m)$ if $c=b$. By $\tilde a$ and $\tilde b$ we mean any functions $\tilde a\in V_a(\delta_1)$ and $\tilde b\in V_b(\delta_2)$, respectively. We have that $\{a\}=V_a(0)\subset V_a(\delta_1)\subset V_a(\delta'_1)$ for $0\leq \delta_1\leq \delta'_1\leq 1$ and $\{b\}=V_b(0)\subset V_b(\delta_2)\subset V_b(\delta'_2)$ for $0\leq \delta_2\leq \delta'_2\leq 1$.

In order to introduce perturbed information about the Wiener process $W$, we introduce the following  classes of corrupting functions for $W$
\begin{eqnarray}
     &&\mathcal{K}_0=\Bigl\{p:[0,T]\times\mathbb{R}^m\to\mathbb{R}^m \ | \ p^j\in C^{1,2}([0,T]\times\mathbb{R}^m;\mathbb{R}), \ |p^j(0,0)|\leq 1,\notag\\
     &&\quad\quad\quad\quad\max\Bigl\{\Bigl|\frac{\partial p^j}{\partial t}(t,y)\Bigl|,\Bigl\|\frac{\partial p^j}{\partial y}(t,y)\Bigl\|,\Bigl\|\frac{\partial^2 p^j}{\partial y^2}(t,y)\Bigl\|\Bigr\}\leq 1\notag\\
     &&\quad\quad\quad\quad\hbox{for all} \ t\in [0,T], y\in\mathbb{R}^m, j=1,2,\ldots,m\Bigr\},
\end{eqnarray}
and
\begin{eqnarray}
    &&\mathcal{K}_{\alpha,\beta} =
        \Bigl\{
            p:[0,T]\times\mathbb{R}^m\to\mathbb{R}^m \ 
            | \ \Vert p(t,x)-p(s, y)\Vert \leq \vert t-s\vert^\alpha
            + \Vert x-y\Vert^\beta,\notag\\
             &&\quad\quad\quad\quad\hbox{for all} \ t,s\in [0,T], x,y\in\mathbb{R}^m\Bigr\}.
\end{eqnarray}
We consider the following classes of disturbed Wiener processes
\begin{equation}
	\mathcal{W}_0(\delta_3)=\{\tilde W \ | \ \exists_{p\in\mathcal{ K}_0}:\forall_{(t,\omega)\in [0,T]\times\Omega} \ \tilde W(t,\omega) = W(t,\omega)+\delta_3\cdot p(t,W(t,\omega))\},
\end{equation}
and
\begin{equation}
	\mathcal{W}_{\alpha,\beta}(\delta_3)=\{\tilde W \ | \ \exists_{p\in\mathcal{K}
	_{\alpha,\beta}}:\forall_{(t,\omega)\in [0,T]\times\Omega} \ \tilde W(t,\omega) = W(t,\omega)+\delta_3\cdot p(t,W(t,\omega))\}.
\end{equation}
We have that $\{W\}=\mathcal{W}_0(0)\subset\mathcal{W}_0(\delta_3)\subset\mathcal{W}_0(\delta'_3)$ for $0\leq \delta_3\leq\delta'_3\leq 1$, and similarly for $\mathcal{W}_{\alpha,\beta}$. As in \cite{AKPMPP}  the classes defined above  allow us to model the impact of regularity of  noise on the error bound.

We assume that the algorithm is based on discrete noisy information about $(a,b,W)$ and exact information about $\eta$. Hence, a vector of noisy information has the following form
\begin{eqnarray}
	\mathcal{N}(\tilde a, \tilde b, \tilde W, \eta)&=&\Bigl[\tilde a (\xi_0,y_0),\tilde a(\xi_1,y_1),\ldots,\tilde a(\xi_{i_1-1},y_{i_1-1}),\notag\\
		 && \tilde b(t_0,z_0), \tilde b(t_1,z_1),\ldots, \tilde b(t_{i_1-1},z_{i_1-1}), \notag\\
		&& \tilde W(u_0), \tilde W(u_1),\ldots,\tilde   W(u_{i_2-1}),\eta],
\end{eqnarray}
where  $i_1,i_2\in\mathbb{N}$ and $(\xi_0,\xi_1,\ldots,\xi_{i_1-1})$ is a random vector on $(\Omega,\Sigma,\mathbb{P})$ which takes values in $[0,T]^{i_1}$. We assume that the $\sigma$-fields $\sigma(\xi_0,\xi_1,\ldots,\xi_{i_1-1})$ and $\Sigma_{\infty}$ are independent. Moreover,   $t_0,t_1,\ldots,t_{i_1-1}\in [0,T]$ and $u_0,u_1,\ldots,u_{i_2-1}\in [0,T]$ are fixed time points. The evaluation points $y_j$, $z_j$ for the spatial variables $y,z$ of $a(\cdot,y)$ and $b(\cdot,z)$ can be computed in adaptive way with respect to $(a,b,\eta)$ and $W$. Formally, it means that  there exist Borel measurable mappings $\psi_0:\mathbb{R}^{i_2\times m}\times\mathbb{R}^d\to\mathbb{R}^{2d}$, $\psi_j:\mathbb{R}^{d\times j}\times\mathbb{R}^{d\times m\times j}\times\mathbb{R}^{i_2\times m}\times\mathbb{R}^d\to\mathbb{R}^{2d}$, $j=1,2,\ldots,i_1-1$, such that the successive points $y_j,z_j$ are given as follows
\begin{equation}
	(y_0,z_0)=\psi_0\Bigl(\tilde W(u_0),\tilde W(u_1),\ldots,\tilde  W(u_{i_2-1}),\eta\Bigr), 
\end{equation}
and for $j=1,2,\ldots, i_1-1$
\begin{eqnarray}
	(y_j,z_j)&=&\psi_j\Bigl(\tilde a(\xi_0,y_0), \tilde a(\xi_1,y_1),\ldots, \tilde a(\xi_{j-1},y_{j-1}),\notag\\
	&&\quad\quad \tilde b(t_0,z_0), \tilde b(t_1,z_1),\ldots, \tilde b(t_{j-1},z_{j-1}),\notag\\
	&&\quad\quad \tilde W(u_0),\tilde W(u_1),\ldots, \tilde W(u_{i_2-1}),\eta\Bigr). 
\end{eqnarray}
The total number of noisy evaluations of $(a,b,W)$ is $l = 2 i_1 + i_2$.

The algorithm $\mathcal{A}$ that uses the noisy information $\mathcal{N}(\tilde a, \tilde b, \tilde W, \eta)$ and computes approximation of $X(T)$ is defined as 
\begin{equation}
\label{alg_def}
    \mathcal{A}(\tilde a,\tilde b, \tilde W,\eta)=\varphi\Bigl(\mathcal{N}(\tilde a,\tilde b, \tilde W,\eta)\Bigr),
\end{equation}
for some Borel measurable function $\varphi:\mathbb{R}^{i_1\times d}\times\mathbb{R}^{i_1\times d\times m}\times\mathbb{R}^{i_2\times m}\times\mathbb{R}^d\to\mathbb{R}^d$. For a fixed $n\in\mathbb{N}$ by $\Phi_n$ we denote a class of all algorithms \eqref{alg_def} for which the total number of evaluations $l$ is at most $n$.

Let $r\in [2,+\infty)$. The $L^r(\Omega)$-error of  $\mathcal{A}\in\Phi_n$ for the fixed tuple $(a,b,\eta)\in\mathcal{G}$ is given by 
\begin{eqnarray}
    &&e^{(r)}(\mathcal{A},a,b,\eta,\mathcal{W},\delta_1,\delta_2,\delta_3)\notag\\
    &&\quad\quad=\sup\limits_{(\tilde a,\tilde b, \tilde W)\in V_a(\delta_1)\times V_b(\delta_2)\times\mathcal{W}(\delta_3)}\|X(a,b,W,\eta)(T)-\mathcal{A}(\tilde a,\tilde b, \tilde W,\eta)\|_r,
\end{eqnarray}
where $\mathcal{W}\in\{\mathcal{W}_0,\mathcal{W}_{\alpha,\beta}\}$ and $\mathcal{G}$ is a subclass of $\mathcal{F}(\varrho,K)$. The worst case error of $\mathcal{A}$ in $\mathcal{G}$ is
\begin{equation}
    e^{(r)}(\mathcal{A},\mathcal{G},\mathcal{W},\delta_1,\delta_2,\delta_3)=\sup\limits_{(a,b,\eta)\in\mathcal{G}} e^{(r)}(\mathcal{A},a,b,\eta,\mathcal{W},\delta_1,\delta_2,\delta_3).
\end{equation}
Finally, we look for  (essentially) sharp bounds for the $n$th minimal error, defined as 
\begin{equation}
\label{def_nminerr}
e^{(r)}_n(\mathcal{G},\mathcal{W},\delta_1,\delta_2,\delta_3)=\inf\limits_{\mathcal{A}\in\Phi_n}e^{(r)}(\mathcal{A},\mathcal{G},\mathcal{W},\delta_1,\delta_2,\delta_3).
\end{equation}
In \eqref{def_nminerr} we define the minimal possible error among all algorithms of the form \eqref{alg_def} that use at most $n$ noisy evaluations of $a,b$ and $W$ . Our aim is to find possibly sharp bounds on the $n$th minimal error, i.e., lower and upper bounds which match up to constants. We are also interested in defining an algorithm for which the infimum in $e^{(r)}_n(\mathcal{G},\mathcal{W},\delta_1,\delta_2,\delta_3)$ is asymptotically attained.  We call such an algorithm the optimal one. 

Unless otherwise stated, all constants appearing in this paper (including those in the 'O', '$\Omega$', and '$\Theta$' notation) will only depend on the parameters of the class $\mathcal{F}(\varrho,K)$, $\alpha,\beta$ and $r$. Furthermore, the same symbol may be used in order to denote  different constants.
\section{Error of the Euler scheme under inexact information}
We investigate the error of the randomized Euler scheme in the case of inexact information  about $a$, $b$, and the driving Wiener process $W$.

Fix $n\in\mathbb{N}$, $t_i=iT/n$ for $i=0,1,\ldots,n$. Let $\{\xi_i\}_{i=0}^{n-1}$ be independent random variables on $(\Omega,\Sigma,\mathbb{P})$, such
that the $\sigma$-fields $\sigma(\xi_0, \xi_1,\ldots, \xi_{n-1})$ and $\Sigma_{\infty}$ are independent, with $\xi_i$ being uniformly distributed on $[t_i,t_{i+1}]$. Let us fix $(a,b,W)\in \mathcal{F}(\varrho,K)$ and take any $(\tilde a,\tilde b, \tilde W)\in V_a(\delta_1)\times V_b(\delta_2)\times\mathcal{W}(\delta_3)$, where $\mathcal{W}\in\{\mathcal{W}_0,\mathcal{W}_{\alpha,\beta}\}$. The randomized Euler scheme under inexact information is defined by taking
\begin{equation}
    \label{EU_NO1}
    		\bar X^{RE}_n(0)=\eta,
\end{equation}
and
\begin{equation}
	\label{EU_NO2}
			\bar X^{RE}_n(t_{i+1})   = \bar X^{RE}_n(t_i) + \tilde a(\xi_i, \bar X^{RE}_n(t_i)) \cdot\frac{T}{n} +  \tilde b(t_i, \bar X^{RE}_n(t_i))  \cdot \Delta \tilde{W}_i,
\end{equation}
for $i=0,1, \ldots, n-1$, where $\Delta \tilde W_i = \tilde W(t_{i+1}) - \tilde W(t_i)$. The randomized Euler algorithm $\mathcal{\bar A}^{RE}_{n}$ is defined as
\begin{equation}
    \mathcal{\bar A}^{RE}_{n}(\tilde a,\tilde b,\tilde W,\eta):=\bar X^{RE}_n(T).
\end{equation}
The informational cost of the randomized Euler algorithm is $O(n)$ noisy evaluations of $a,b,W$. By  $X^{RE}_{n}$ we denote the randomized Euler algorithm $\bar X^{RE}_{n}$ under the case when information is exact, i.e., when $\delta_1=\delta_2=\delta_3=0$. 

Let $\mathcal{G}^n=\sigma(\xi_0,\xi_1,\ldots,\xi_{n-1})$ and $\tilde\Sigma_t^n=\sigma\Bigl(\Sigma_t\cup\mathcal{G}^n\Bigr)$, $t\geq 0$. Since the $\sigma$-fields $\Sigma_{\infty}$ and $\mathcal{G}^n$ are independent, the process $W$ is still the  $m$-dimensional Wiener process on $(\Omega,\Sigma,\mathbb{P})$ with respect to $\{\tilde\Sigma_t^n\}_{t\geq 0}$.
\begin{proposition}  \label{prop1} Let $r\in [2,+\infty)$, $\varrho\in (0,1]$. 
\begin{itemize}
    \item [(i)]  There exists  $C\in (0,+\infty)$, depending only on the parameters of the class $\mathcal{F}(\varrho,K)$ and $r$, such that for all $n\in\mathbb{N}$, $\delta_1,\delta_2,\delta_3\in [0,1]$, $(a,b,\eta)\in\mathcal{F}(\varrho,K)$, $(\tilde a,\tilde b, \tilde W)\in V_a(\delta_1)\times V_b(\delta_2)\times\mathcal{W}_0(\delta_3)$ it holds
\begin{equation}
    \Bigl\|\max\limits_{0\leq i\leq n}\|X^{RE}_{n}(t_i)-\bar X^{RE}_{n}(t_i)\|\Bigl\|_r\leq C(\delta_1+\delta_2+\delta_3).
\end{equation}
    \item [(ii)] Let $\alpha,\beta\in (0,1]$. There exists  $C\in (0,+\infty)$, depending only on the parameters of the class $\mathcal{F}(\varrho,K)$, $\alpha,\beta$, and $r$, such that for all $n\in\mathbb{N}$, $\delta_1,\delta_2,\delta_3\in [0,1]$, $(a,b,\eta)\in\mathcal{F}(\varrho,K)$, $(\tilde a,\tilde b, \tilde W)\in V_a(\delta_1)\times V_b(\delta_2)\times\mathcal{W}_{\alpha,\beta}(\delta_3)$ it holds
\begin{eqnarray}
    &&\Bigl\|\max\limits_{0\leq i\leq n}\|X^{RE}_{n}(t_i)-\bar X^{RE}_{n}(t_i)\|\Bigl\|_r\notag\\
    &&\leq C(\delta_1+\delta_2+\delta_3\cdot n^{1-\gamma})\cdot (1+\delta_3 n^{1-\gamma})\cdot e^{C(1+(\delta_3 n^{1-\gamma})^r)},
\end{eqnarray}
where $\gamma=\min\{\alpha,\beta/2\}$.
\end{itemize}
\end{proposition}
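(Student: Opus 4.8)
The plan is to compare the two discrete trajectories step by step and set up a discrete Gronwall-type recursion for the error $e_i := X^{RE}_n(t_i) - \bar X^{RE}_n(t_i)$. Subtracting \eqref{EU_NO2} from its exact-information counterpart, I would split the one-step increment of $e_i$ into contributions coming from (a) the perturbation of the drift, $(a - \tilde a)(\xi_i, \bar X^{RE}_n(t_i))$, (b) the perturbation of the diffusion, $(b - \tilde b)(t_i, \bar X^{RE}_n(t_i))\cdot \Delta\tilde W_i$, (c) the difference between the true Brownian increment and the perturbed one, $b(t_i, X^{RE}_n(t_i))\cdot(\Delta W_i - \Delta\tilde W_i)$, and (d) the Lipschitz-type terms $a(\xi_i,\cdot)$ and $b(t_i,\cdot)$ evaluated at the two different trajectory values, which produce terms of the form $\|e_i\|$ times an increment. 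Using $\|a - \tilde a\| \le \delta_1(1+\|y\|)$, $\|b - \tilde b\|\le \delta_2(1+\|y\|)$, the Lipschitz bounds from $\mathcal A_K$, $\mathcal B^\varrho_K$, and the (easily established, or already available from the Appendix) bound $\sup_n \|\max_i \|X^{RE}_n(t_i)\|\|_r \le C$ on the moments of the Euler trajectory, terms (a), (b), (d) are handled exactly as in \cite{PMPP17,PMPP19}.

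The genuinely new ingredient is term (c), i.e. controlling $D_i := \Delta W_i - \Delta\tilde W_i = -\delta_3\big(p(t_{i+1}, W(t_{i+1})) - p(t_i, W(t_i))\big)$. For part (i), where $p \in \mathcal K_0$, I would apply Itô's formula to $p^j(t, W(t))$ componentwise on $[t_i, t_{i+1}]$: because $p^j \in C^{1,2}$ with $\partial_t p^j$, $\partial_y p^j$, $\partial_y^2 p^j$ all bounded by $1$, one gets $p^j(t_{i+1},W(t_{i+1})) - p^j(t_i, W(t_i)) = \int_{t_i}^{t_{i+1}} (\mathcal L p^j)(s, W(s))\,\rd s + \int_{t_i}^{t_{i+1}} \tfrac{\partial p^j}{\partial y}(s, W(s))\,\rd W(s)$, so $D_i$ is, up to the factor $\delta_3$, a sum of an $O(T/n)$ drift term and a martingale increment whose conditional second moment is $O(T/n)$. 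Crucially, these increments form (after conditioning on $\mathcal G^n$) a martingale-difference-type sequence adapted to $\{\tilde\Sigma^n_{t_i}\}$, so when they are multiplied by $b(t_i, X^{RE}_n(t_i))$ and summed, the Burkholder–Davis–Gundy inequality gives a bound of order $\delta_3 \cdot \big(\sum_i (T/n)\big)^{1/2} = O(\delta_3)$ rather than $O(\delta_3 \sqrt n)$. Feeding this, together with (a), (b), (d), into the discrete Gronwall lemma from the Appendix yields $\|\max_i \|e_i\|\|_r \le C(\delta_1+\delta_2+\delta_3)$.

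For part (ii), where $p \in \mathcal K_{\alpha,\beta}$ has no differentiability, the Itô trick is unavailable, so the only usable estimate is the crude one $\|D_i\| \le \delta_3(|t_{i+1}-t_i|^\alpha + \|\Delta W_i\|^\beta)$, hence $\|D_i\|_r \le C\delta_3\big((T/n)^\alpha + (T/n)^{\beta/2}\big) = C\delta_3 n^{-\gamma}$ with $\gamma = \min\{\alpha,\beta/2\}$. Now the diffusion-type term (c) can only be bounded by summing absolute values over the $n$ steps — one loses the BDG square-root cancellation because the $D_i$ need not be martingale differences — producing a factor $n \cdot n^{-\gamma} = n^{1-\gamma}$; moreover the perturbed increments $\Delta\tilde W_i$ appearing in term (b) themselves satisfy $\|\Delta\tilde W_i\|_r \le C(\sqrt{T/n} + \delta_3 n^{-\gamma})$, which inflates the multiplicative constant in front of $\|e_i\|$ in the recursion, and this is what forces the extra factors $(1+\delta_3 n^{1-\gamma})$ and $e^{C(1+(\delta_3 n^{1-\gamma})^r)}$ after applying a nonlinear (Gronwall-with-random-coefficients) version of the discrete comparison lemma. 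The main obstacle is precisely this step: one must track how the stochastic prefactors $\|\Delta\tilde W_i\|$ enter the Gronwall iteration and show that iterating $n$ times produces only the stated $\exp\!\big(C(1+(\delta_3 n^{1-\gamma})^r)\big)$ blow-up; I would do this by raising the recursion to the $r$-th power, taking conditional expectations given $\tilde\Sigma^n_{t_i}$ to exploit independence of $\Delta W_i$ from the past, and then invoking the auxiliary discrete Gronwall lemma stated in the Appendix.
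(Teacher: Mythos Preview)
For part (i) your plan coincides with the paper's: apply It\^o's formula to $p_W(t,W(t))$ so that $\Delta Z_i$ splits into a martingale increment and a bounded-variation piece, then use Burkholder--Davis--Gundy on the martingale sums and the discrete Gronwall lemma. Only the bookkeeping differs slightly (the paper attaches $\tilde b(t_j,\bar X^{RE}_n(t_j))$ to the Wiener-perturbation term rather than $b(t_j,X^{RE}_n(t_j))$), but this is immaterial in the $\mathcal{W}_0$ case.

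For part (ii) the two arguments diverge in a way that matters. The paper's decomposition is arranged so that the Lipschitz diffusion term $B_j=[b(t_j,X^{RE}_n(t_j))-b(t_j,\bar X^{RE}_n(t_j))]\,\Delta W_j$ carries the \emph{true} Brownian increment; hence $\bigl(\sum_{j\le k}B_j\bigr)_k$ is a genuine discrete martingale and BDG yields the clean coefficient $C/n$ in the Gronwall recursion. All the $\delta_3$--dependence is pushed into the source term $\bar R_n$, and the exponential factor $e^{C(1+(\delta_3 n^{1-\gamma})^r)}$ arises \emph{not} from the Gronwall iteration itself but from the moment bound on the noisy scheme $\bar X^{RE}_n$ (Lemma~\ref{est_cont_Euler2}), which is proved separately and enters only through $\|C_{1,j}\|,\|C_{2,j}\|,\|C_{3,j}\|$.

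Your decomposition instead keeps $\Delta\tilde W_i$ inside the Lipschitz term (d) and inside term (b). This forces a ``Gronwall with random coefficients'' argument that you only sketch, and it obscures the actual mechanism: your terms (a) and (b) still contain the factor $(1+\|\bar X^{RE}_n(t_i)\|)$, whose $L^r$-moments are \emph{not} uniformly bounded in the $\mathcal{W}_{\alpha,\beta}$ case --- you need precisely Lemma~\ref{est_cont_Euler2} here, yet your proposal only invokes a uniform bound for the \emph{exact} scheme $X^{RE}_n$. Without that lemma the source terms cannot be estimated, and with it the exponential factor is already accounted for, making the random-coefficient Gronwall unnecessary. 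In short, rearranging the decomposition so that the Lipschitz terms see only $\Delta W_j$ (as the paper does) both simplifies the recursion and isolates the blowup in a single, separately provable moment estimate.
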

\begin{proof} Firstly, we prove (i). For $\tilde W\in\mathcal{W}_0(\delta_3)$ we have that
\begin{equation}
    \tilde W(t)=W(t)+\delta_3\cdot Z(t),
\end{equation}
with $Z(t)=p_W(t,W(t))$ and $p_W\in\mathcal{K}_0$. Then, by the It\^o formula we get that
\begin{equation}
	Z(t)=Z(0)+M(t)+V(t), \quad t\in [0,T],
\end{equation}
where $M(t)=[M^1(t),M^2(t),\ldots, M^
m(t)]$, $V(t)=[V^1(t),V^2(t),\ldots, V^m(t)]$ and
\begin{eqnarray}
	V^j(t)&=&\int\limits_0^t\mathcal{L}p^j_W(z,W(z))\rd z,\\
	M^j(t)&=&\sum\limits_{i=1}^m\int\limits_0^t\frac{\partial p^j_W}{\partial y_i}(z,W(z))\rd W^i(z),
\end{eqnarray}
for $j=1,2,\ldots,m$. We stress that $\{V(t)\}_{t\in [0,T]}$ is a continuous process of bounded variation that is adapted to $\{\tilde\Sigma_t^n\}_{t\geq 0}$.
Moreover, since $(M(t),\tilde\Sigma_t^n)_{t\in [0,T]}$ is a martingale, $Z$ is still a continuous semimartingale  with respect to the extended filtration  $\{\tilde\Sigma_t^n\}_{t\geq 0}$. In the sequel we will consider stochastic  integrals, with respect to the semimartingales $W$ and $Z$, of processes that are adapted to the filtration $\{\tilde\Sigma_t^n\}_{t\geq 0}$.

From  \eqref{EU_NO1}, \eqref{EU_NO2}  for $i=0,1,\ldots,n$ we can write that
\begin{eqnarray}
    &&\bar X^{RE}_n(t_i)=\eta+\frac{T}{n}\sum\limits_{j=0}^{i-1}\tilde a(\xi_j,\bar X^{RE}_n(t_j))+\ \sum\limits_{j=0}^{i-1}\tilde b(t_j,\bar X^{RE}_n(t_j))\cdot\Delta W_j\notag\\
    &&\quad\quad\quad\quad\quad\quad+\delta_3\sum\limits_{j=0}^{i-1}\tilde b(t_j,\bar X^{RE}_n(t_j))\cdot\Delta Z_j,
\end{eqnarray}
and
\begin{eqnarray}
    &&X^{RE}_n(t_i)=\eta+\frac{T}{n}\sum\limits_{j=0}^{i-1} a(\xi_j,X^{RE}_n(t_j))+\ \sum\limits_{j=0}^{i-1} b(t_j, X^{RE}_n(t_j))\cdot\Delta W_j.
\end{eqnarray}
Therefore
\begin{eqnarray}
     &&\bar e_i :=  X^{RE}_n(t_i)-\bar X^{RE}_n(t_i)=\sum\limits_{j=0}^{i-1}\int\limits_{t_j}^{t_{j+1}}\Bigl(a(\xi_j, X^{RE}_n(t_j))-\tilde a(\xi_j,\bar X^{RE}_n(t_j))\Bigr)\rd s\notag\\
     &&\quad\quad\quad\quad\quad\quad\quad\quad\quad\quad\quad\quad+\ \sum\limits_{j=0}^{i-1}\int\limits_{t_j}^{t_{j+1}}\Bigl(b(t_j,X^{RE}_n(t_j))-\tilde b(t_j,\bar X^{RE}_n(t_j))\Bigr)\rd W(s)\notag\\
     &&\quad\quad\quad\quad\quad\quad\quad\quad\quad\quad\quad\quad+\ (-\delta_3)\sum\limits_{j=0}^{i-1}\int\limits_{t_j}^{t_{j+1}}\tilde b(t_j,\bar X^{RE}_n(t_j))\rd Z(s)\notag\\
     &&\quad\quad\quad\quad\quad\quad\quad\quad\quad\quad\quad=\ \sum\limits_{j=0}^{i-1}\Bigl(A_{j}+B_{j}+C_{1,j}+C_{2,j}+C_{3,j}\Bigr),
\end{eqnarray}
where
\begin{eqnarray}
     &&A_{j}=\int\limits_{t_j}^{t_{j+1}}\Bigl(a(\xi_j,X^{RE}_n(t_j))-a(\xi_j,\bar X^{RE}_n(t_j))\Bigr)\rd s\notag\\
     &&B_{j}=\int\limits_{t_j}^{t_{j+1}}\Bigl(b(t_j,X^{RE}_n(t_j))-b(t_j,\bar X^{RE}_n(t_j))\Bigr)\rd W(s)\notag\\
     &&C_{1,j}=(-\delta_1)\int\limits_{t_j}^{t_{j+1}}p_a(\xi_j,\bar X^{RE}_n(t_j))\rd s\notag\\
     &&C_{2,j}=(-\delta_2)\int\limits_{t_j}^{t_{j+1}}p_b(t_j,\bar X^{RE}_n(t_j))\rd W(s),
\end{eqnarray}
\begin{eqnarray}
     &&C_{3,j}=(-\delta_3)\int\limits_{t_j}^{t_{j+1}}\tilde b(t_j,\bar X^{RE}_n(t_j))\rd Z(s)\notag\\
     &&=(-\delta_3)\int\limits_{t_j}^{t_{j+1}}\tilde b(t_j,\bar X^{RE}_n(t_j))\rd M(s)+(-\delta_3)\int\limits_{t_j}^{t_{j+1}}\tilde b(t_j,\bar X^{RE}_n(t_j))\rd V(s)\notag\\
     &&=(-\delta_3)\int\limits_{t_j}^{t_{j+1}}\tilde b(t_j,\bar X^{RE}_n(t_j))\frac{\partial p_W}{\partial y}(s,W(s))\rd W(s)\notag\\
     &&+(-\delta_3)\int\limits_{t_j}^{t_{j+1}}\tilde b(t_j,\bar X^{RE}_n(t_j))\mathcal{L}p_W(s,W(s))\rd s.
\end{eqnarray}
Then for all $i=0,1,\ldots,n$
\begin{eqnarray}
     &&\|\bar e_i\|\leq\sum\limits_{j=0}^{i-1}\|A_j\|+\Bigl\|\sum\limits_{j=0}^{i-1}B_j\Bigl\|+\sum\limits_{j=0}^{i-1}\|C_{1,j}\|+\Bigl\|\sum\limits_{j=0}^{i-1}C_{2,j}\Bigl\|+\Bigl\|\sum\limits_{j=0}^{i-1}C_{3,j}\Bigl\|,
\end{eqnarray}
where
\begin{equation}
    \Bigl\|\sum\limits_{j=0}^{i-1}C_{3,j}\Bigl\|\leq\delta_3\cdot\Bigl(C^1_{3,i}+C^2_{3,i}\Bigr),
\end{equation}
with
\begin{eqnarray}
     &&C^1_{3,i}= \Bigl\|\sum\limits_{j=0}^{i-1} \int\limits_{t_j}^{t_{j+1}}\tilde b
     (t_j,\bar X^{RE}_n(t_j))\frac{\partial p_W}{\partial y}(s,W(s))\rd W(s)\Bigl\|,\\
     &&C^2_{3,i}=\sum\limits_{j=0}^{i-1}\Bigl\|\int\limits_{t_j}^{t_{j+1}}\tilde b
     (t_j,\bar X^{RE}_n(t_j))\mathcal{L}p_W(s,W(s))\rd s\Bigl\|.
\end{eqnarray}
Hence, for $i=0,1,\ldots,n$
\begin{eqnarray}
     &&\|\bar e_i\|\leq\sum\limits_{j=0}^{i-1}\|A_j\|+\Bigl\|\sum\limits_{j=0}^{i-1}B_j\Bigl\|+\sum\limits_{j=0}^{n-1}\|C_{1,j}\|\notag\\
     &&\quad\quad\quad+\max\limits_{1\leq i \leq n}\Bigl\|\sum\limits_{j=0}^{i-1}C_{2,j}\Bigl\|+\delta_3\cdot\max\limits_{1\leq i \leq n} C^1_{3,i}+\delta_3 \cdot C^2_{3,n},
\end{eqnarray}
and for all $k=0,1,\ldots,n$
\begin{eqnarray}
\label{error_est_0}
     &&\mathbb{E}\Bigl(\max\limits_{0\leq i\leq k}\|\bar e_i\|^r\Bigr)\leq c_r\mathbb{E}\Bigl(\sum\limits_{j=0}^{k-1}\|A_j\|\Bigr)^r+c_r\mathbb{E}\Bigl(\max\limits_{1\leq i\leq k}\Bigl\|\sum\limits_{j=0}^{i-1}B_j\Bigl\|^r\Bigr)+c_r\mathbb{E}\Bigl(\sum\limits_{j=0}^{n-1}\|C_{1,j}\|\Bigr)^r\notag\\
     &&\quad\quad\quad+c_r\mathbb{E}\Bigl(\max\limits_{1\leq i \leq n}\Bigl\|\sum\limits_{j=0}^{i-1}C_{2,j}\Bigl\|^r\Bigr)+c_r\delta_3^r\cdot\mathbb{E}\Bigl(\max\limits_{1\leq i \leq n} (C^1_{3,i})^r\Bigr)+c_r\delta_3^r \cdot \mathbb{E}(C^2_{3,n})^r.
\end{eqnarray}
By the Jensen inequality we have for $k=0,1,\ldots,n$
\begin{equation}
    \Bigl(\sum\limits_{j=0}^{k-1}\|\bar e_j\|\Bigr)^r\leq k^{r-1}\sum\limits_{j=0}^{k-1}\|e_j\|^r\leq n^{r-1}\sum\limits_{j=0}^{k-1}\|e_j\|^r,
\end{equation}
which implies that
\begin{equation}
    \Bigl(\frac{1}{n}\sum\limits_{j=0}^{k-1}\|\bar e_j\|\Bigr)^r\leq\frac{1}{n}\sum\limits_{j=0}^{k-1}\|e_j\|^r.
\end{equation}
Moreover
\begin{equation}
    \|A_j\|\leq \frac{KT}{n}\|\bar e_j\|,
\end{equation}
and hence 
\begin{equation}
\label{est_A_1}
    \Bigl(\sum\limits_{j=0}^{k-1}\|A_j\|\Bigr)^r\leq K^rT^r\Bigl(\frac{1}{n}\sum\limits_{j=0}^{k-1}\|\bar e_j\|\Bigr)^r\leq\frac{K^rT^r}{n}\sum\limits_{j=0}^{k-1}\|\bar e_j\|^r.
\end{equation}
It holds  that $\displaystyle{\Bigl(\sum\limits_{j=0}^k B_j,\tilde{\Sigma}^n_{t_{k+1}}\Bigr)_{k=0,1,\ldots,n-1}}$ is a discrete-time martingale. To see that let us denote by $\displaystyle{M_k \coloneqq \sum\limits_{j=0}^k B_j}$. By the basic properties of the It\^o integral we get for $k=0,1,\ldots,n-1$ that $\sigma ( M_k ) \subset \tilde{\Sigma}^n_{t_{k+1}}$,
\begin{eqnarray*}
\mathbb{E}(M_{k+1} - M_k | \tilde{\Sigma}^n_{t_{k+1}}) = \mathbb{E}(B_{k+1} | \tilde{\Sigma}^n_{t_{k+1}}) = 0, \quad k=0,1,\ldots,n-2,
\end{eqnarray*}
and for $j=0,1,\ldots,n-1$
\begin{equation}
    \mathbb{E}\|B_j\|^r\leq C(T/n)^{r/2}\mathbb{E}\|\bar e_j\|^r<+\infty.
\end{equation}
Hence, by the Burkholder and Jensen inequalities we get for $k=0,1,\ldots,n$
\begin{eqnarray}
\label{est_B_1}
     &&\mathbb{E}\Bigl(\max\limits_{1\leq i\leq k}\Bigl\|\sum\limits_{j=0}^{i-1}B_j\Bigl\|^r\Bigr)=\mathbb{E}\Bigl(\max\limits_{0\leq i\leq k-1}\Bigl\|\sum\limits_{j=0}^{i}B_j\Bigl\|^r\Bigr)\leq C_r^r\mathbb{E}\Bigl(\sum\limits_{j=0}^{k-1}\|B_j\|^2\Bigr)^{r/2}\notag\\
     &&\quad\quad\quad\quad\quad\quad\quad \quad\quad\quad \leq C_r^r k^{\frac{r}{2}-1}\sum\limits_{j=0}^{k-1}\mathbb{E}\|B_j\|^r\leq \frac{C}{n}\sum\limits_{j=0}^{k-1}\mathbb{E}\|\bar e_j\|^r.
\end{eqnarray}
From \eqref{error_est_0}, \eqref{est_A_1}, \eqref{est_B_1}, and the fact that $\bar e_0=0$ we get for $k=0,1,\ldots,n$ that
\begin{eqnarray}
\label{main_err_est_1}
    &&\mathbb{E}\Bigl(\max\limits_{0\leq i\leq k}\|\bar e_i\|^r\Bigr)\leq \frac{C}{n}\sum\limits_{j=0}^{k-1}\mathbb{E}\|\bar e_j\|^r+c_r R_n\leq\frac{C}{n}\sum\limits_{j=0}^{k-1}\mathbb{E}\Bigl(\max\limits_{0\leq i\leq j}\|\bar e_i\|^r\Bigr)+c_r R_n\notag\\
    &&\quad\quad\quad\quad\quad\quad\quad=\frac{C}{n}\sum\limits_{j=1}^{k-1}\mathbb{E}\Bigl(\max\limits_{0\leq i\leq j}\|\bar e_i\|^r\Bigr)+c_r R_n,
\end{eqnarray}
where
\begin{eqnarray}
   && R_n=\mathbb{E}\Bigl(\sum\limits_{j=0}^{n-1}\|C_{1,j}\|\Bigr)^r+\mathbb{E}\Bigl(\max\limits_{1\leq i \leq n}\Bigl\|\sum\limits_{j=0}^{i-1}C_{2,j}\Bigl\|^r\Bigr)\notag\\
   &&\quad\quad + \delta_3^r\cdot\mathbb{E}\Bigl(\max\limits_{1\leq i \leq n} (C^1_{3,i})^r\Bigr)+\delta_3^r \cdot \mathbb{E}(C^2_{3,n})^r.
\end{eqnarray}
By the discrete version of the Gronwall's lemma (see, for example, Lemma 2.1 in \cite{KRWU_0}) we get 
\begin{equation}
\label{est_bier_1}
    \mathbb{E}\Bigl(\max\limits_{0\leq i\leq n}\|\bar e_i\|^r\Bigr)\leq KR_n.
\end{equation}
By the Jensen inequality and Lemma \ref{est_cont_Euler} we get
\begin{eqnarray}
\label{est_bier_2}
     &&\mathbb{E}\Bigl(\sum\limits_{j=0}^{n-1}\|C_{1,j}\|\Bigr)^r\leq
     \delta_1^r T^r n^{-1} \sum\limits_{j=0}^{n-1}\mathbb{E}\|p_a(\xi_j,\bar X^{RE}_n(t_j))\|^r\notag\\
     &&\leq C\delta_1^r (1+\max\limits_{0\leq j\leq n}\mathbb{E}\|\bar X^{RE}_n(t_j)\|^r)\leq K_1\delta_1^r.
\end{eqnarray}
The process $\displaystyle{\Bigl(\sum\limits_{j=0}^k C_{2,j},\tilde{\Sigma}^n_{t_{k+1}}\Bigr)_{k=0,1,\ldots,n-1}}$ is a discrete-time martingale - this can be justified in analogous way as for $\displaystyle{\Bigl(\sum\limits_{j=0}^k B_j,\tilde{\Sigma}^n_{t_{k+1}}\Bigr)_{k=0,1,\ldots,n-1}}$. Hence, again by the Burkholder and Jensen inequalities, we obtain
\begin{eqnarray}
\label{est_bier_3}
     &&\mathbb{E}\Bigl(\max\limits_{1\leq i \leq n}\Bigl\|\sum\limits_{j=0}^{i-1}C_{2,j}\Bigl\|^r\Bigr)\leq C_r^r n^{\frac{r}{2}-1}\sum\limits_{j=0}^{n-1}\mathbb{E}\|C_{2,j}\|^r\notag\\
     &&\quad\quad\leq K_1 (1+\max\limits_{0\leq j\leq n}\mathbb{E}\|\bar X^{RE}_n(t_j)\|^r)\delta_2^r
     \leq K_2\delta_2^r.
\end{eqnarray}
Let us denote by 
\begin{equation}
    D_j=\int\limits_{t_j}^{t_{j+1}}\tilde b
     (t_j,\bar X^{RE}_n(t_j))\frac{\partial p_W}{\partial y}(s,W(s))\rd W(s),
\end{equation}
then $\displaystyle{\Bigl(\sum\limits_{j=0}^k D_{j},\tilde{\Sigma}^n_{t_{k+1}}\Bigr)_{k=0,1,\ldots,n-1}}$ is also a discrete-time martingale. Therefore, by the Burkholder and Jensen inequalities, we obtain
\begin{eqnarray}
     &&\mathbb{E}\Bigl(\max\limits_{1\leq i \leq n} (C^1_{3,i})^r\Bigr)=\mathbb{E}\Bigl(\max\limits_{1\leq i\leq n}\Bigl\|\sum\limits_{j=0}^{i-1}D_j\Bigl\|^r\Bigr)\leq C_r^r n^{\frac{r}{2}-1}\sum\limits_{j=0}^{n-1}\mathbb{E}\|D_j\|^r,
\end{eqnarray}
where, by \eqref{est_dpdy} and submultiplicativity of the Frobenius norm, we get
\begin{eqnarray}
     &&\mathbb{E}\|D_j\|^r=\mathbb{E}\Bigl\|\int\limits_{t_j}^{t_{j+1}}\tilde b
     (t_j,\bar X^{RE}_n(t_j))\frac{\partial p_W}{\partial y}(s,W(s))\rd W(s)\Bigl\|^r\notag\\
     &&\leq C(T/n)^{\frac{r}{2}-1}\mathbb{E}\int\limits_{t_j}^{t_{j+1}}\Bigl\|\tilde b
     (t_j,\bar X^{RE}_n(t_j))\frac{\partial p_W}{\partial y}(s,W(s))\Bigl\|^r \rd s\notag\\
     &&\leq C(T/n)^{\frac{r}{2}-1}\mathbb{E}\int\limits_{t_j}^{t_{j+1}}\Bigl\|\tilde b
     (t_j,\bar X^{RE}_n(t_j))\Bigl\|^r\cdot\Bigl\|\frac{\partial p_W}{\partial y}(s,W(s))\Bigl\|^r \rd s\leq K_3 n^{-r/2},
\end{eqnarray}
for $j=0,1,\ldots,n-1$. This implies that
\begin{equation}
\label{est_bier_4}
    \mathbb{E}\Bigl(\max\limits_{1\leq i \leq n} (C^1_{3,i})^r\Bigr)\leq K_4.
\end{equation}
Finally, from \eqref{est_lpw} and Lemma \ref{est_cont_Euler}
\begin{eqnarray}
\label{est_bier_5}
     &&\mathbb{E}(C^2_{3,n})^r=\mathbb{E}\Bigl(\sum\limits_{j=0}^{n-1}\Bigl\|\int\limits_{t_j}^{t_{j+1}}\tilde b
     (t_j,\bar X^{RE}_n(t_j))\mathcal{L}p_W(s,W(s))\rd s\Bigl\|\Bigr)^r\notag\\
     &&\leq n^{r-1}\sum\limits_{j=0}^{n-1}\mathbb{E}\Bigl\|\int\limits_{t_j}^{t_{j+1}}\tilde b
     (t_j,\bar X^{RE}_n(t_j))\mathcal{L}p_W(s,W(s))\rd s\Bigl\|^r\notag\\
     &&\leq n^{r-1}\sum\limits_{j=0}^{n-1}\mathbb{E}\Bigl(\int\limits_{t_j}^{t_{j+1}}\|\tilde b
     (t_j,\bar X^{RE}_n(t_j))\|\cdot\|\mathcal{L}p_W(s,W(s))\|\rd s\Bigr)^r\notag\\
     &&\leq \frac{C}{n}\sum\limits_{j=0}^{n-1}\mathbb{E}\|\tilde b
     (t_j,\bar X^{RE}_n(t_j))\|^r\leq K_5.
\end{eqnarray}
Combining  \eqref{est_bier_1}, \eqref{est_bier_2}, \eqref{est_bier_3}, \eqref{est_bier_4}, \eqref{est_bier_5} we obtain 
\begin{equation}
     \mathbb{E}\Bigl(\max\limits_{0\leq i\leq n}\|\bar e_i\|^r\Bigr)\leq K_6(\delta_1^r+\delta_2^r+\delta_3^r),
\end{equation}
which proves (i). 

We now show (ii). Let $\tilde W\in\mathcal{W}_{\alpha,\beta}(\delta_3)$. Note that in this case $Z$ might not be semimartingale nor even a process of bounded variation. Hence, we have that
\begin{equation}
     \bar e_i =  X^{RE}_n(t_i)-\bar X^{RE}_n(t_i)=\sum\limits_{j=0}^{i-1}\Bigl(A_{j}+B_{j}+C_{1,j}+C_{2,j}+C_{3,j}\Bigr),
\end{equation}
where
\begin{eqnarray}
     &&A_{j}=\int\limits_{t_j}^{t_{j+1}}\Bigl(a(\xi_j,X^{RE}_n(t_j))-a(\xi_j,\bar X^{RE}_n(t_j))\Bigr)\rd s\notag\\
     &&B_{j}=\int\limits_{t_j}^{t_{j+1}}\Bigl(b(t_j,X^{RE}_n(t_j))-b(t_j,\bar X^{RE}_n(t_j))\Bigr)\rd W(s)\notag\\
     &&C_{1,j}=(-\delta_1)\int\limits_{t_j}^{t_{j+1}}p_a(\xi_j,\bar X^{RE}_n(t_j))\rd s\notag\\
     &&C_{2,j}=(-\delta_2)\int\limits_{t_j}^{t_{j+1}}p_b(t_j,\bar X^{RE}_n(t_j))\rd W(s),
\end{eqnarray}
and
\begin{equation}
     C_{3,j}=(-\delta_3)\cdot\tilde b(t_j,\bar X^{RE}_n(t_j))\cdot\Delta Z_j.
\end{equation}
Using similar arguments as for the proof of \eqref{main_err_est_1} we have
\begin{equation}
    \mathbb{E}\Bigl(\max\limits_{0\leq i\leq k}\|\bar e_i\|^r\Bigr)\leq \frac{C}{n}\sum\limits_{j=1}^{k-1}\mathbb{E}\Bigl(\max\limits_{0\leq i\leq j}\|\bar e_i\|^r\Bigr)+c_r \bar R_n,
\end{equation}
where this time we get  from \eqref{est_bier_2}, \eqref{est_bier_3}, and Lemma \ref{est_cont_Euler2} that
\begin{eqnarray}
\label{bar_rn_1}
   && \bar R_n=\mathbb{E}\Bigl(\sum\limits_{j=0}^{n-1}\|C_{1,j}\|\Bigr)^r+\mathbb{E}\Bigl(\max\limits_{1\leq i \leq n}\Bigl\|\sum\limits_{j=0}^{i-1}C_{2,j}\Bigl\|^r\Bigr) + \mathbb{E}\Bigl(\max\limits_{1\leq i \leq n}\Bigl\|\sum\limits_{j=0}^{i-1}C_{3,j}\Bigl\|^r\Bigr)\notag\\
   &&\leq C(\delta_1^r+\delta_2^r)(1+\delta_3^r n^{r(1-\gamma)})\cdot e^{C(1+\delta_3^r n^{r(1-\gamma)})}+\mathbb{E}\Bigl(\max\limits_{1\leq i \leq n}\Bigl\|\sum\limits_{j=0}^{i-1}C_{3,j}\Bigl\|^r\Bigr).
\end{eqnarray}
Agian by the discrete version of the Gronwall's lemma we get 
\begin{equation}
\label{est_bier_1}
    \mathbb{E}\Bigl(\max\limits_{0\leq i\leq n}\|\bar e_i\|^r\Bigr)\leq K\bar R_n.
\end{equation}
Moreover,
\begin{equation}
    \Biggl\|\max\limits_{1\leq i \leq n}\Bigl\|\sum\limits_{j=0}^{i-1}C_{3,j}\Bigl\|\Biggl\|_r\leq \Bigl\|\sum\limits_{j=0}^{n-1}\|C_{3,j}\|\Bigl\|_r\leq\sum\limits_{j=0}^{n-1}\Bigl\| \|C_{3,j}\|\Bigl\|_r,
\end{equation}
and, since $\bar X^{RE}_n(t_j)$ and $\Delta W_j$ are independent, we have by Lemma \ref{est_cont_Euler2}
\begin{eqnarray}
    &&\Bigl\| \|C_{3,j}\|\Bigl\|_r\leq C\delta_3\Bigl\|1+\|\bar X^{RE}_n(t_j)\|\Bigl\|_r\cdot\Bigl\|(T/n)^{\alpha}+\|\Delta W_j\|^{\beta}\Bigl\|_r\notag\\
    &&\leq C\delta_3\cdot n^{-\gamma}\cdot\Bigl(1+\max\limits_{0\leq i \leq n}\|\bar X^{RE}_n(t_i)\|_r\Bigr)\notag\\
    &&\leq C \delta_3 n^{-\gamma}\cdot (1+\delta_3 n^{1-\gamma})\cdot e^{C(1+(\delta_3 n^{1-\gamma})^r)},
\end{eqnarray}
and
\begin{equation}
\label{est_c3j}
    \Biggl\|\max\limits_{1\leq i \leq n}\Bigl\|\sum\limits_{j=0}^{i-1}C_{3,j}\Bigl\|\Biggl\|_r\leq C \delta_3 n^{1-\gamma}\cdot (1+\delta_3 n^{1-\gamma})\cdot e^{C(1+(\delta_3 n^{1-\gamma})^r)}.
\end{equation}
From \eqref{bar_rn_1}, \eqref{est_bier_1}, and \eqref{est_c3j} we get the thesis of (i).
\end{proof}
\begin{theorem}
\label{theorem_error_bounds}
Let $r\in [2,+\infty)$, $\varrho\in (0,1]$. 
\begin{itemize}
    \item [(i)] There exists  $C\in (0,+\infty)$, depending only on the parameters of the class $\mathcal{F}(\varrho,K)$ and $r$, such that for all $n\in\mathbb{N}$, $\delta_1,\delta_2,\delta_3\in [0,1]$, $(a,b,\eta)\in\mathcal{F}(\varrho,K)$, $(\tilde a,\tilde b, \tilde W)\in V_a(\delta_1)\times V_b(\delta_2)\times\mathcal{W}_0(\delta_3)$ it holds
\begin{equation}
   \|X(a,b,W,\eta)(T)-\mathcal{\bar A}^{RE}_n(\tilde{a}, \tilde{b},\tilde{W},\eta)\|_r\leq C(n^{-\min\{\varrho,1/2\}}+\delta_1+\delta_2+\delta_3).
\end{equation}
 \item [(ii)] Let $\alpha,\beta\in (0,1]$. There exists  $C\in (0,+\infty)$, depending only on the parameters of the class $\mathcal{F}(\varrho,K)$ and $r$, such that for all $n\in\mathbb{N}$, $\delta_1,\delta_2,\delta_3\in [0,1]$, $(a,b,\eta)\in\mathcal{F}(\varrho,K)$, $(\tilde a,\tilde b, \tilde W)\in V_a(\delta_1)\times V_b(\delta_2)\times\mathcal{W}_{\alpha,\beta}(\delta_3)$ it holds
\begin{eqnarray}
   &&\|X(a,b,W,\eta)(T)-\mathcal{\bar A}^{RE}_n(\tilde{a}, \tilde{b},\tilde{W},\eta)\|_r\leq Cn^{-\min\{\varrho,1/2\}}\notag\\
   &&\quad\quad+C(\delta_1+\delta_2+\delta_3\cdot n^{1-\gamma})\cdot (1+\delta_3 n^{1-\gamma})\cdot e^{C(1+(\delta_3 n^{1-\gamma})^r)},
\end{eqnarray}
 where $\gamma=\min\{\alpha,\beta/2\}$.
\end{itemize}
\end{theorem}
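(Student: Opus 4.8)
The plan is to obtain both assertions from a single triangle--inequality decomposition, combining the \emph{exact}-information error of the randomized Euler scheme with the perturbation estimate already established in Proposition~\ref{prop1}. Writing $\mathcal{\bar A}^{RE}_n(\tilde a,\tilde b,\tilde W,\eta)=\bar X^{RE}_n(T)$ and letting $X^{RE}_n(T)$ denote the output of the same scheme fed with exact data, I would start from
\begin{equation*}
\|X(a,b,W,\eta)(T)-\mathcal{\bar A}^{RE}_n(\tilde a,\tilde b,\tilde W,\eta)\|_r\le \|X(T)-X^{RE}_n(T)\|_r+\Bigl\|\max_{0\le i\le n}\|X^{RE}_n(t_i)-\bar X^{RE}_n(t_i)\|\Bigr\|_r .
\end{equation*}
The second summand is precisely what Proposition~\ref{prop1} bounds: for $\tilde W\in\mathcal{W}_0(\delta_3)$ it is at most $C(\delta_1+\delta_2+\delta_3)$ by part~(i), while for $\tilde W\in\mathcal{W}_{\alpha,\beta}(\delta_3)$ it is at most $C(\delta_1+\delta_2+\delta_3 n^{1-\gamma})(1+\delta_3 n^{1-\gamma})e^{C(1+(\delta_3 n^{1-\gamma})^r)}$ by part~(ii), with $\gamma=\min\{\alpha,\beta/2\}$. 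These are exactly the informational-noise terms appearing in the two claims, so the whole theorem reduces to the clean rate $\|X(T)-X^{RE}_n(T)\|_r\le Cn^{-\min\{\varrho,1/2\}}$ for the exact-information randomized Euler scheme, uniformly over $\mathcal{F}(\varrho,K)$.

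For that rate I would argue as in the standard analysis of randomized (Carath\'eodory-type) Euler schemes. First, linear growth of $a,b$ together with Burkholder/Jensen and a discrete Gronwall step give $\max_{0\le i\le n}\|X^{RE}_n(t_i)\|_r\le C$ and $\sup_{t\in[0,T]}\|X(t)\|_r\le C$ (this is the content of Lemma~\ref{est_cont_Euler}), and $X$ is $\tfrac12$-H\"older in $L^r(\Omega)$. Setting $e_i:=X(t_i)-X^{RE}_n(t_i)$ and subtracting the two recursions, $e_{i+1}-e_i$ splits into a drift part $\int_{t_i}^{t_{i+1}}\bigl(a(s,X(s))-a(\xi_i,X^{RE}_n(t_i))\bigr)\rd s$ and a diffusion part $\int_{t_i}^{t_{i+1}}\bigl(b(s,X(s))-b(t_i,X^{RE}_n(t_i))\bigr)\rd W(s)$. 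In the diffusion part, \eqref{lip_b1} controls the space increment by $\|e_i\|$ plus the $\tfrac12$-H\"older increment of $X$, and \eqref{lip_b2} controls the time increment by $n^{-\varrho}$; a Burkholder estimate on the resulting discrete martingale then contributes $Cn^{-\min\{\varrho,1/2\}}$ plus a term feeding the Gronwall loop. In the drift part, the essential point is that conditionally on $\Sigma_\infty$ the variables $\xi_i$ are independent and uniform on $[t_i,t_{i+1}]$, so $\tfrac{T}{n}a(\xi_i,X^{RE}_n(t_i))$ is an unbiased one-sample quadrature for $\int_{t_i}^{t_{i+1}}a(s,X^{RE}_n(t_i))\rd s$; the sum of these conditionally centred, conditionally independent errors is a martingale whose $L^r$ Burkholder bound is of order $n^{-1/2}$, while the Lipschitz-in-space remainder $a(s,X(s))-a(s,X^{RE}_n(t_i))$ again feeds the Gronwall loop. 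Applying the discrete Gronwall lemma (Lemma~2.1 in \cite{KRWU_0}) yields $\mathbb{E}\max_{0\le i\le n}\|e_i\|^r\le Cn^{-r\min\{\varrho,1/2\}}$, hence the bound at $t_n=T$; alternatively, this estimate is the $\delta_1=\delta_2=\delta_3=0$ specialization of the results of \cite{PMPP17}, \cite{PMPP19} and may simply be quoted.

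The one genuinely delicate ingredient is this exact-information rate, and inside it the drift term: since $a$ is only Borel measurable in time, the $n^{-1/2}$ contribution cannot come from a Taylor or H\"older expansion but must be extracted from the randomization through the conditional-martingale (Monte-Carlo) argument above, and one has to check carefully that all the conditional expectations vanish — which they do, because $\sigma(\xi_0,\dots,\xi_{n-1})$ is independent of $\Sigma_\infty$, exactly the property recorded just before Proposition~\ref{prop1}. The diffusion estimate, the moment bounds and the Gronwall step are routine. Combining the exact-information rate with the two cases of Proposition~\ref{prop1} and absorbing constants into $C$ then yields assertions~(i) and~(ii).
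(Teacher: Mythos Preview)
Your proposal is correct and follows essentially the same route as the paper: the triangle-inequality split into the exact-information error $\|X(T)-X^{RE}_n(T)\|_r$ and the perturbation term bounded by Proposition~\ref{prop1}, with the exact rate $Cn^{-\min\{\varrho,1/2\}}$ quoted from Proposition~1 of \cite{PMPP17}. The only difference is that the paper simply cites that result, whereas you additionally sketch its proof via the conditional-martingale argument for the randomized drift; this extra detail is sound and not needed, since you also note it can be quoted.
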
 
\begin{proof}
By Proposition 1 in \cite{PMPP17} (for the case $\delta_1=\delta_2=\delta_3=0$) and from Proposition \ref{prop1} (i) we get
\begin{eqnarray}
&&\|X(a,b,W,\eta)(T)-\mathcal{\bar A}^{RE}_n(\tilde{a}, \tilde{b},\tilde{W},\eta)\|_r\leq \|X(a,b,W,\eta)(T)-X_n^{RE}(a,b,W,\eta)(T)\|_r\notag\\
&&+\|X_n^{RE}(a,b,W,\eta)(T)-\bar X_n^{RE}(\tilde a,\tilde b,\tilde W,\eta)(T)\|_r\leq C_1n^{-\min\{\varrho,1/2\}}+C_2(\delta_1+\delta_2+\delta_3),
\end{eqnarray}
which implies (i). Similarly, by using the above error decomposition together with Proposition \ref{prop1} (ii) we obtain the thesis of (ii).
\end{proof}
\section{Lower bounds and optimality of the randomized Euler algorithm}
In this section, we investigate lower error bound for an arbitrary method \eqref{alg_def} from the class $\Phi_n$. We focus only on the class $\mathcal{W}_0$ of disturbed Wiener processes $\tilde W$. Essentially, sharp lower bounds in the class $\mathcal{W}_{\alpha,\beta}$ are left as an open problem. For some special cases we also show optimality of the randomized Euler algorithm $\mathcal{\bar A}^{RE}_n$.

The following result follows from Lemma 3 in \cite{PMPP17} and Theorem \ref{theorem_error_bounds}.
\begin{theorem}
\label{thm_lower_bounds}
Let $r\in [2,+\infty)$, $K\in (0,+\infty)$, $\varrho\in (0,1]$. Then there exist   $C_1,C_2\in (0,+\infty)$, depending only on the parameter of the class $\mathcal{F}(\varrho,K)$ and $r$, such that for all $n\in\mathbb{N}$, $\delta_1,\delta_2,\delta_3\in [0,1]$ it holds
\begin{displaymath}
	C_1(n^{-\min\{\varrho,1/2\}}+\delta_1+\delta_2)\leq e^{(r)}_n(\mathcal{F}(\varrho,K),\mathcal{W}_0,\delta_1,\delta_2,\delta_3)\leq C_2(n^{-\min\{\varrho,1/2\}}+\delta_1+\delta_2+\delta_3).
\end{displaymath}
In particular, the $n$th minimal error satisfies
\begin{equation}
\label{opt_1}
    e^{(r)}_n(\mathcal{F}(\varrho,K),\mathcal{W}_0,\delta_1,\delta_2,0)=\Theta(n^{-\min\{\varrho,1/2\}}+\delta_1+\delta_2),
\end{equation}
and
\begin{equation}
\label{opt_2}
     e^{(r)}_n(\mathcal{F}(\varrho,K),\mathcal{W}_0,\delta_1,\delta_2,\max\{\delta_1,\delta_2\})=\Theta(n^{-\min\{\varrho,1/2\}}+\delta_1+\delta_2),
\end{equation}
as $n\to+\infty$, $\max\{\delta_1,\delta_2\}\to 0^+$. In both cases \eqref{opt_1}, \eqref{opt_2} an optimal algorithm is the randomized Euler algorithm $\mathcal{\bar A}^{RE}_n$. 
\end{theorem}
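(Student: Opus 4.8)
The plan is to assemble the two-sided bound from two independent facts: the per-instance upper estimate already established in Theorem~\ref{theorem_error_bounds}(i), and the lower bound of \cite{PMPP17} for the exact-$W$ case, which transfers here by a monotonicity argument in $\delta_3$. I would organize the argument as upper bound, lower bound, and then the $\Theta$-identities together with the optimality claim.

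\emph{Upper bound.} First I would substitute the per-instance estimate of Theorem~\ref{theorem_error_bounds}(i) into the definition \eqref{def_nminerr}. The randomized Euler algorithm $\mathcal{\bar A}^{RE}_n$ run with $n$ subintervals uses $cn$ noisy evaluations of $(a,b,W)$ for an absolute constant $c$; running it with $\lfloor n/c\rfloor$ subintervals places it in $\Phi_n$ while changing $n^{-\min\{\varrho,1/2\}}$ only by a constant factor. By Theorem~\ref{theorem_error_bounds}(i) this algorithm satisfies, uniformly in $(a,b,\eta)\in\mathcal{F}(\varrho,K)$ and in $(\tilde a,\tilde b,\tilde W)\in V_a(\delta_1)\times V_b(\delta_2)\times\mathcal{W}_0(\delta_3)$,
\[
\|X(a,b,W,\eta)(T)-\mathcal{\bar A}^{RE}_n(\tilde a,\tilde b,\tilde W,\eta)\|_r\leq C_2\bigl(n^{-\min\{\varrho,1/2\}}+\delta_1+\delta_2+\delta_3\bigr),
\]
so passing to the suprema defining $e^{(r)}(\mathcal{\bar A}^{RE}_n,\mathcal{F}(\varrho,K),\mathcal{W}_0,\delta_1,\delta_2,\delta_3)$ and then to the infimum over $\Phi_n$ gives the asserted upper bound.

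\emph{Lower bound.} Here the point I would exploit is monotonicity of the admissible noise class in $\delta_3$. Since $\{W\}=\mathcal{W}_0(0)\subseteq\mathcal{W}_0(\delta_3)$, for every fixed $\mathcal{A}\in\Phi_n$ the supremum over $\tilde W\in\mathcal{W}_0(\delta_3)$ in the definition of the error is over a set containing $\tilde W=W$; hence $e^{(r)}_n(\mathcal{F}(\varrho,K),\mathcal{W}_0,\delta_1,\delta_2,\delta_3)\geq e^{(r)}_n(\mathcal{F}(\varrho,K),\mathcal{W}_0,\delta_1,\delta_2,0)$. For $\delta_3=0$ the only admissible disturbed process is $W$ itself, i.e.\ we are in the exact-information-for-$W$ setting of \cite{PMPP17}, and Lemma~3 there gives $e^{(r)}_n(\mathcal{F}(\varrho,K),\mathcal{W}_0,\delta_1,\delta_2,0)\geq C_1(n^{-\min\{\varrho,1/2\}}+\delta_1+\delta_2)$. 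That lemma produces three separate hard families — one forcing the discretization error $\Omega(n^{-\min\{\varrho,1/2\}})$ already under exact information, and two forcing $\Omega(\delta_1)$ and $\Omega(\delta_2)$ by exhibiting coefficients $a$ vs.\ $a+\delta_1 p_a$ and $b$ vs.\ $b+\delta_2 p_b$ that generate identical noisy information — which are then combined by bounding a maximum below by an average. Together with the upper bound this yields the claimed two-sided estimate.

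\emph{$\Theta$-identities and optimality.} Both \eqref{opt_1} and \eqref{opt_2} follow by matching constants. For \eqref{opt_1} put $\delta_3=0$: upper and lower bounds then coincide up to constants. For \eqref{opt_2} put $\delta_3=\max\{\delta_1,\delta_2\}\leq\delta_1+\delta_2$, so the upper bound becomes $C_2(n^{-\min\{\varrho,1/2\}}+\delta_1+\delta_2+\max\{\delta_1,\delta_2\})\leq 2C_2(n^{-\min\{\varrho,1/2\}}+\delta_1+\delta_2)$, again matching $C_1(n^{-\min\{\varrho,1/2\}}+\delta_1+\delta_2)$; in both cases the upper bound is attained up to a constant by $\mathcal{\bar A}^{RE}_n$, so this algorithm is optimal in the sense of Section~2. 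The only delicate point I anticipate is checking that the hard instances and corrupting functions from Lemma~3 of \cite{PMPP17} are admissible in the present model — their perturbations can be chosen bounded by $1+\|y\|$, hence lie in $\mathcal{K}^1$, $\mathcal{K}^m$ — after which everything else is routine bookkeeping with the $\delta_3$-monotonicity and the definition of $\Phi_n$.
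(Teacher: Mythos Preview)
Your proposal is correct and follows exactly the approach the paper takes: the paper's proof consists of a single sentence citing Lemma~3 in \cite{PMPP17} for the lower bound and Theorem~\ref{theorem_error_bounds} for the upper bound, and you have simply supplied the natural details (monotonicity in $\delta_3$ to reduce to the exact-$W$ case, the cost-rescaling to place $\mathcal{\bar A}^{RE}_n$ in $\Phi_n$, and the arithmetic for the two $\Theta$-identities).
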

In general we have a gap between upper and lower bounds, and sharp bounds appear only in special cases (i.e.: when $\delta_3=0$ or $\delta_3=\max\{\delta_1,\delta_2\}$). However, in the particular case for the randomized Euler algorithm we have the following bounds for its worst-case error (the proof follows from Proposition 1 in \cite{AKPMPP}).
\begin{proposition}
Let $r\in [2,+\infty)$, $K\in (0,+\infty)$, $\varrho\in (0,1]$. Then for the randomized Euler algorithm $\mathcal{\bar A}^{RE}_n$ it holds
\begin{displaymath}
	 e^{(r)}(\mathcal{\bar A}^{RE}_n,\mathcal{F}(\varrho,K),\mathcal{W}_0,\delta_1,\delta_2,\delta_3)=\Theta(n^{-\min\{\varrho,1/2\}}+\delta_1+\delta_2+\delta_3),
\end{displaymath}
as $n\to+\infty$, $\max\{\delta_1,\delta_2,\delta_3\}\to 0+$.
\end{proposition}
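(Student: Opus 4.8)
The plan is to prove the two matching bounds separately. The upper bound is essentially a restatement of Theorem~\ref{theorem_error_bounds}~(i), while the lower bound is obtained by combining the already available estimate of Theorem~\ref{thm_lower_bounds} with a dedicated construction isolating the contribution of $\delta_3$; it is precisely this last step where Proposition~1 in \cite{AKPMPP} enters (one may either quote it or spell out the short argument below).

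For the upper bound, note that Theorem~\ref{theorem_error_bounds}~(i) holds uniformly over $(a,b,\eta)\in\mathcal{F}(\varrho,K)$ and over all admissible perturbations $(\tilde a,\tilde b,\tilde W)\in V_a(\delta_1)\times V_b(\delta_2)\times\mathcal{W}_0(\delta_3)$. Taking the supremum first over the perturbations and then over the class gives
\[
e^{(r)}(\mathcal{\bar A}^{RE}_n,\mathcal{F}(\varrho,K),\mathcal{W}_0,\delta_1,\delta_2,\delta_3)\le C\bigl(n^{-\min\{\varrho,1/2\}}+\delta_1+\delta_2+\delta_3\bigr),
\]
so no further work is needed here.

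For the lower bound I would proceed in two steps. First, since the randomized Euler algorithm $\mathcal{\bar A}^{RE}_n$ uses $l=3n+1\le 4n$ noisy evaluations of $(a,b,W)$ (namely $n$ evaluations each of $a$ and $b$ and $n+1$ evaluations of $W$), it belongs to $\Phi_{4n}$, and therefore $e^{(r)}(\mathcal{\bar A}^{RE}_n,\mathcal{F}(\varrho,K),\mathcal{W}_0,\delta_1,\delta_2,\delta_3)\ge e^{(r)}_{4n}(\mathcal{F}(\varrho,K),\mathcal{W}_0,\delta_1,\delta_2,\delta_3)$; Theorem~\ref{thm_lower_bounds} then yields $\gtrsim n^{-\min\{\varrho,1/2\}}+\delta_1+\delta_2$ after absorbing the harmless factor $4^{-\min\{\varrho,1/2\}}$. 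Second, to recover the $\delta_3$ term I would test $\mathcal{\bar A}^{RE}_n$ on a single triple with a perturbation of $W$ only: take $\eta=0$, $a\equiv 0$, and $b\equiv\bar e$ with $\bar e\in\mathbb{R}^{d\times m}$ the constant matrix whose $(1,1)$ entry is $\min\{1,K\}$ and all other entries zero, so that $(a,b,\eta)\in\mathcal{F}(\varrho,K)$ and $X(a,b,W,\eta)(T)=\bar e\cdot W(T)$. Choose $p\in\mathcal{K}_0$ with $p^1(t,y)=\tfrac12\sin y_1$ and $p^j\equiv 0$ for $j\ge 2$ (one checks at once that $p^1(0,0)=0$ and that $|\partial_t p^1|$, $\|\partial_y p^1\|$, $\|\partial^2_y p^1\|$ are all bounded by $\tfrac12$), and set $\tilde W=W+\delta_3\,p(\cdot,W(\cdot))\in\mathcal{W}_0(\delta_3)$, $\tilde a=a$, $\tilde b=b$. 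Because $b$ is constant and $p(0,0)=0$, the Euler recursion telescopes exactly:
\[
\mathcal{\bar A}^{RE}_n(\tilde a,\tilde b,\tilde W,0)=\bar e\cdot\bigl(\tilde W(T)-\tilde W(0)\bigr)=\bar e\cdot W(T)+\delta_3\,\bar e\cdot p(T,W(T)),
\]
so the error equals $\delta_3\,\bar e\cdot p(T,W(T))$, whose only nonzero coordinate is $\tfrac12\min\{1,K\}\sin(W^1(T))$; hence its $L^r(\Omega)$-norm is $c_0\delta_3$ with $c_0=\tfrac12\min\{1,K\}\,\bigl(\mathbb{E}|\sin W^1(T)|^r\bigr)^{1/r}>0$, since $W^1(T)\sim N(0,T)$ is nondegenerate. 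This gives $e^{(r)}(\mathcal{\bar A}^{RE}_n,\mathcal{F}(\varrho,K),\mathcal{W}_0,\delta_1,\delta_2,\delta_3)\ge c_0\delta_3$ for every $\delta_1,\delta_2\in[0,1]$.

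Finally, combining the two lower bounds via $\max\{x,y\}\ge\tfrac12(x+y)$ produces $e^{(r)}(\mathcal{\bar A}^{RE}_n,\dots)\gtrsim n^{-\min\{\varrho,1/2\}}+\delta_1+\delta_2+\delta_3$, which together with the upper bound established above gives the asserted $\Theta$ as $n\to+\infty$ and $\max\{\delta_1,\delta_2,\delta_3\}\to0^+$. I do not expect any real obstacle: the only point needing a moment's care is checking that the chosen $p$ genuinely lies in $\mathcal{K}_0$ and that there is no cancellation in the error, and both are immediate here because $a\equiv0$ and $b$ is constant, so the Euler output reduces to an exact telescoping sum.
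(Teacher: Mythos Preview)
Your proposal is correct and follows essentially the same route the paper indicates: the upper bound is Theorem~\ref{theorem_error_bounds}~(i), the $n^{-\min\{\varrho,1/2\}}+\delta_1+\delta_2$ part of the lower bound comes from Theorem~\ref{thm_lower_bounds} via $\mathcal{\bar A}^{RE}_n\in\Phi_{4n}$, and the $\delta_3$ part is exactly the content of Proposition~1 in \cite{AKPMPP}. The paper itself gives no details beyond that citation, so your explicit construction (constant diffusion, zero drift, $p^1(t,y)=\tfrac12\sin y_1$) is a faithful and self-contained realization of what that reference supplies; the telescoping and the membership $p\in\mathcal{K}_0$ are verified correctly.
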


\section{Numerical experiments}
\def\mW{m}
\def\d{\mathrm{d}} 

Let us consider the following linear SDE that describes the well-known multidimensional Black-Scholes model
\begin{equation}
	\label{PROBLEM_LIN_SDE_MD}
		\left\{ \begin{array}{ll}
			\d X(t)=\begin{pmatrix}\mu_1 X_1(t)\\
   \mu_2 X_2(t)\\ \vdots \\ \mu_d X_d(t) \end{pmatrix}\d t+\begin{pmatrix}
\sigma^{1,1}X_1(t) &\sigma^{1,2}X_1(t)  & \cdots & \sigma^{1, \mW}X_1(t)\\ 
\sigma^{2,1}X_2(t) &\sigma^{2,2}X_2(t)  & \cdots & \sigma^{2, \mW}X_2(t)\\ 
\vdots  & \vdots  & \ddots  & \vdots \\ 
\sigma^{d,1}X_d(t) &\sigma^{d,2}X_d(t)  & \cdots & \sigma^{d, \mW}X_d(t) 
\end{pmatrix}\d W(t), & \\
			X(0)=x_0,  \quad t\in [0,T],
		\end{array}\right.
\end{equation}
where $\sigma^{i,j}>0$ for $i\in\{1,\ldots, d\}$, $j\in\{1,\ldots, \mW\}$, $\mu_i\in\R$ , $x_0\in\R_{+}^d$. 
Functions $a$ and $b$  take the following forms
$a(t,x)= (\mu_1 x_1,\ldots,\mu_d x_d)^{\text{T}},
b(t,x)=\begin{pmatrix}
\sigma^{1,1}x_1 &\sigma^{1,2}x_1  & \cdots & \sigma^{1, \mW}x_1\\ 
\sigma^{2,1}x_2 &\sigma^{2,2}x_2  & \cdots & \sigma^{2, \mW}x_2\\ 
\vdots  & \vdots  & \ddots  & \vdots \\ 
\sigma^{d,1}x_d &\sigma^{d,2}x_d  & \cdots & \sigma^{d, \mW}x_d 
\end{pmatrix}.$ 
The exact solution of problem \eqref{PROBLEM_LIN_SDE_MD} has the following form

\begin{equation*}
	\label{exact_solM_md}
X_i (t)=X_i(0) \cdot\exp\Biggl(\Biggl(\mu-\frac{1}{2}\sum_{j=1}^{\mW } \Big(\sigma^{i,j}\Big)^2\Biggr)t+\sum_{j=1}^{\mW} \sigma^{i,j} W_j(t)\Biggr)
\end{equation*}
for $i=1,\ldots,d.$

To perform numerical experiments, we choose two examples.

\textbf{Example 1}

	$a(t,x)= \begin{pmatrix}
  0.5 x_1 ,0.7 x_2\\
  \end{pmatrix} ^ T,$ 
  $\ b(t,x)=\begin{pmatrix}
  0.5 x_1 , 0.7 x_1 , 0.2 x_1\\
       -0.5 x_2 , -0.7 x_2   , -0.2 x_2\\
  \end{pmatrix}$,
  $x_0 = (1,2)^T$, $T=1$.

\textbf{Example 2}

	$a(t,x)=\begin{pmatrix}
  0.5 x_1 ,0.7 x_2, 0.4 x_3\\
  \end{pmatrix} ^ T,$ 
  $\ b(t,x)=\begin{pmatrix}
  0.5 x_1 , 0.7 x_1 , 0.2 x_1\\
       0.1 x_2 , 0. x_2   , 0.013x_2\\
       0.  x_3  , 0.75 x_3, 0.013 x_3\\
  \end{pmatrix}$,
  $x_0 = (1,0.1,0.4)^T$, $T=1$.
 We take an estimator of the error of $\|X(T)-\bar X_{n}^{RE}(T)\|_{2}$
\begin{equation}
\label{est_err_1}
	\varepsilon_K(\bar X^{RE}_{n}(T))=\Biggl(\frac{1}{K}\sum_{j=1}^K \|X_{(j)}(T)-\-\bar X^{RE}_{(j),n}(T)\|^2\Biggr)^{1/2}.
\end{equation} 
We also conduct numerical experiments for an equation in which  the exact solution is unknown (Example 3). In this case, to estimate the error $\|X(T)-\bar X_{n}^{RE}(T)\|_{2}$, the exact solution $X(T)$ is approximated by $\bar X^{RE}_{n}$ computed under exact information with for $n=1310720=10\cdot2^{17}$, and then
\begin{equation}
\label{est_err_2}
	\varepsilon_K(\bar X^{RE}_{n}(T))=\Biggl(\frac{1}{K}\sum_{j=1}^K \|\bar X^{RE}_{(j),1310720}(T)-\-\bar X^{RE}_{(j),n}(T)\|^2\Biggr)^{1/2}.
\end{equation} 

\textbf{Example 3}
	$a(t,x)= 0.5\begin{pmatrix}
  t  \sin (10 x_1)\\
  \cos (7x_2)
  \end{pmatrix} ^ T,$ 
  $\ b(t,x)=\begin{pmatrix}
  tx_1 & tx_2 & \sin(x_2) \\
  t \cos (x_1) & x_2 & -x_1
  \end{pmatrix}$,
  $x_0 = (1,2)^T$, $T=1$.

For Examples 1 - 3 we used $K=20000$.

\subsection{Linear disturbing function}

To obtain results in the numerical simulations related to the Theorem \ref{theorem_error_bounds} (ii) we propose the following disruptive function
 $$p_a(t,x)= \textrm{U}_1 \cdot a(t,x)$$ 
 and $$p_b(t,x)= \textrm{U}_2 \cdot b(t,x),$$
 where $\textrm{U}_1,\textrm{U}_2$ has uniform distribution over the interval $[-1,1]$.

As a corrupting function for the Wiener process $W$ in Examples 1 - 3  we take
\begin{equation}
\label{disruptive_wiener_ex1-3}
    p_w(t,x) =\textrm{U}_3\cdot x,
\end{equation}
 where $\textrm{U}_3$ is a random variable with a uniform distribution over the interval $[-1,1]$. We also assume that $\textrm{U}_1$, $\textrm{U}_2$, $\textrm{U}_3$ are independent of $W$ and $\mathcal{G}^n$. We use those uniform distributions to better approximate the worst case scenario setting of error.

\begin{figure*}[h!]
    \centering
    \includegraphics[width=1.0\textwidth]{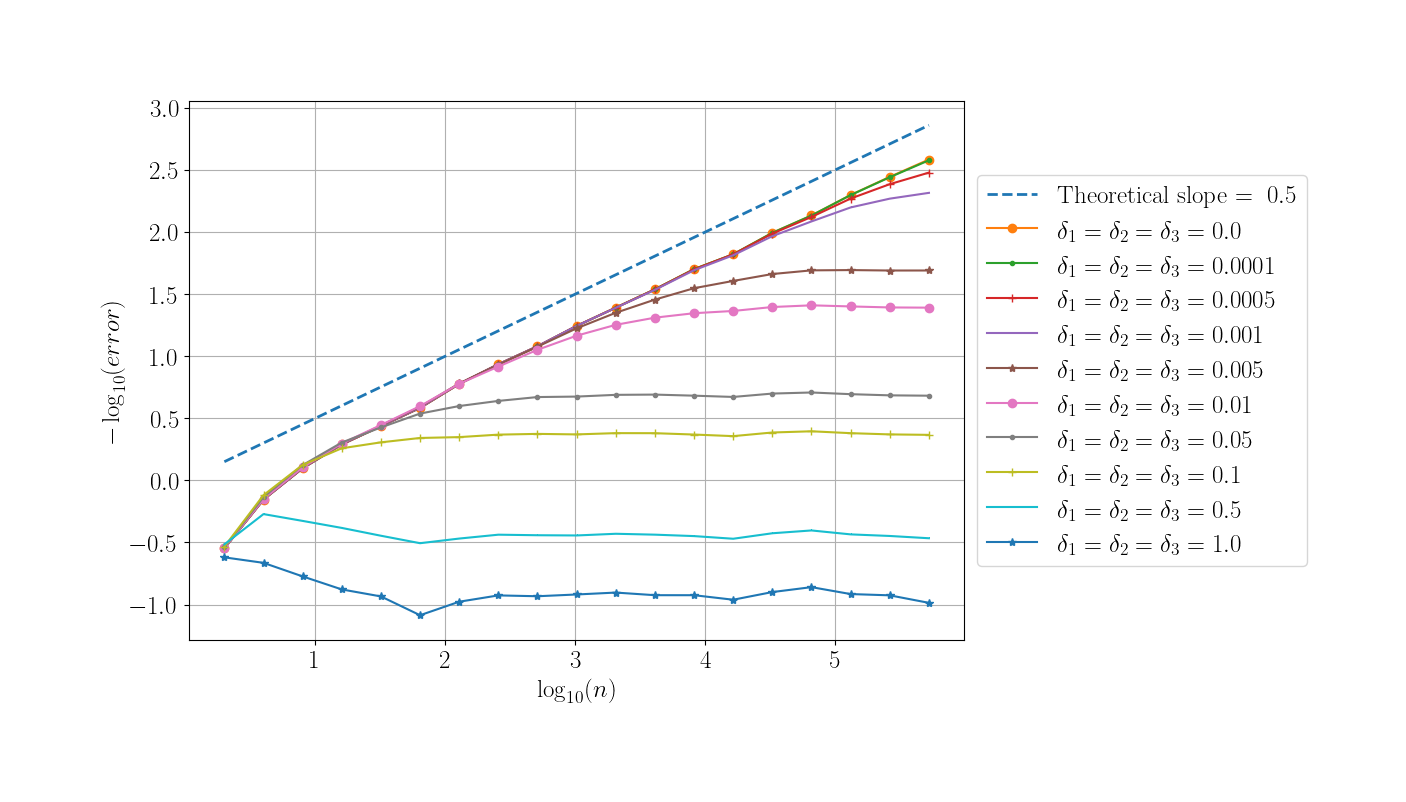}
    \caption{Log-log plot of the randomized error of the Euler algorithm for the Example 1 with $p_w$ as disruption of the Wiener process}
    \label{fig:example1:w0}
\end{figure*}

\begin{figure*}[h!]
    \centering
    \includegraphics[width=1.0\textwidth]{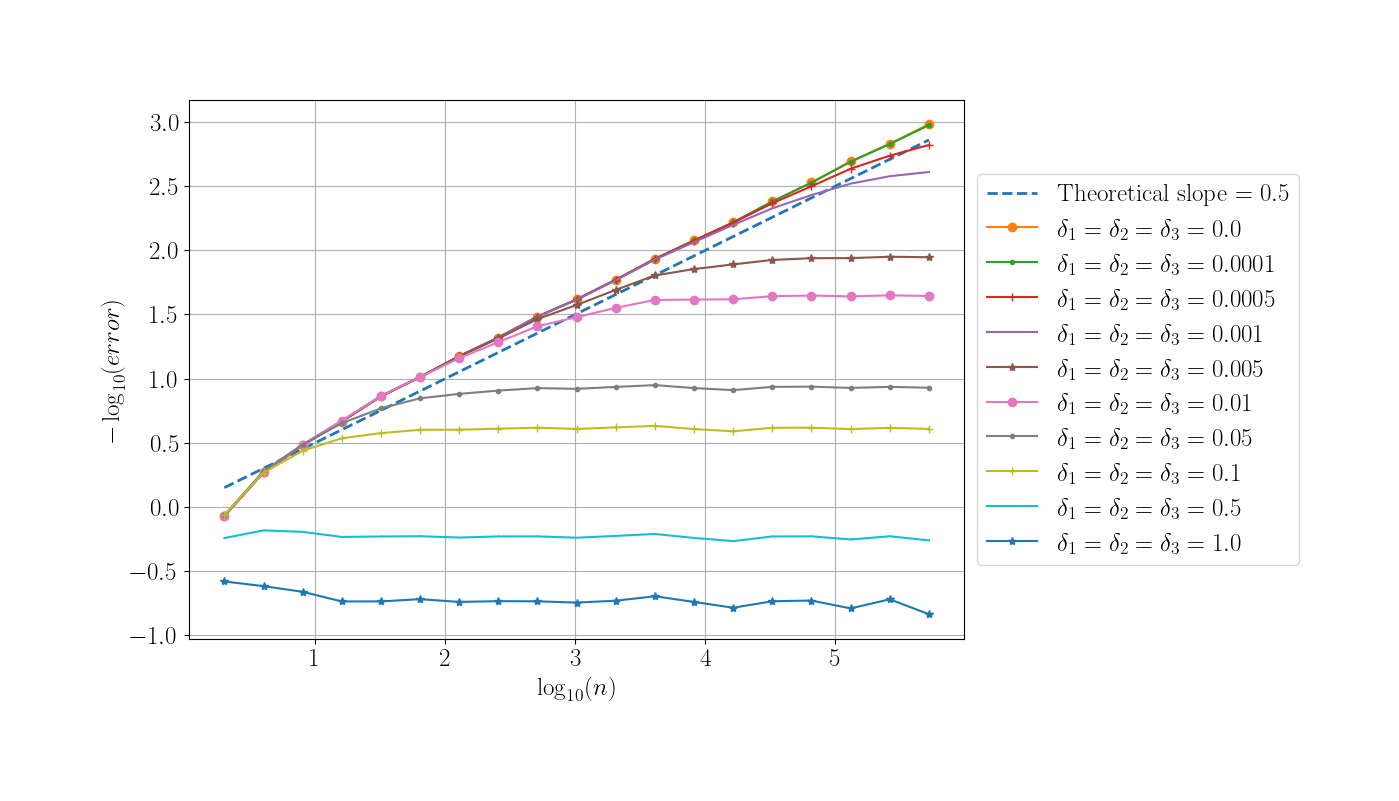}
    \caption{Log-log plot of the randomized error of the Euler algorithm for the Example 2 with $p_w$ as disruption of the Wiener process}
    \label{fig:example2:w0}
\end{figure*}

\begin{figure*}[h!]
    \centering
    \includegraphics[width=1.0\textwidth]{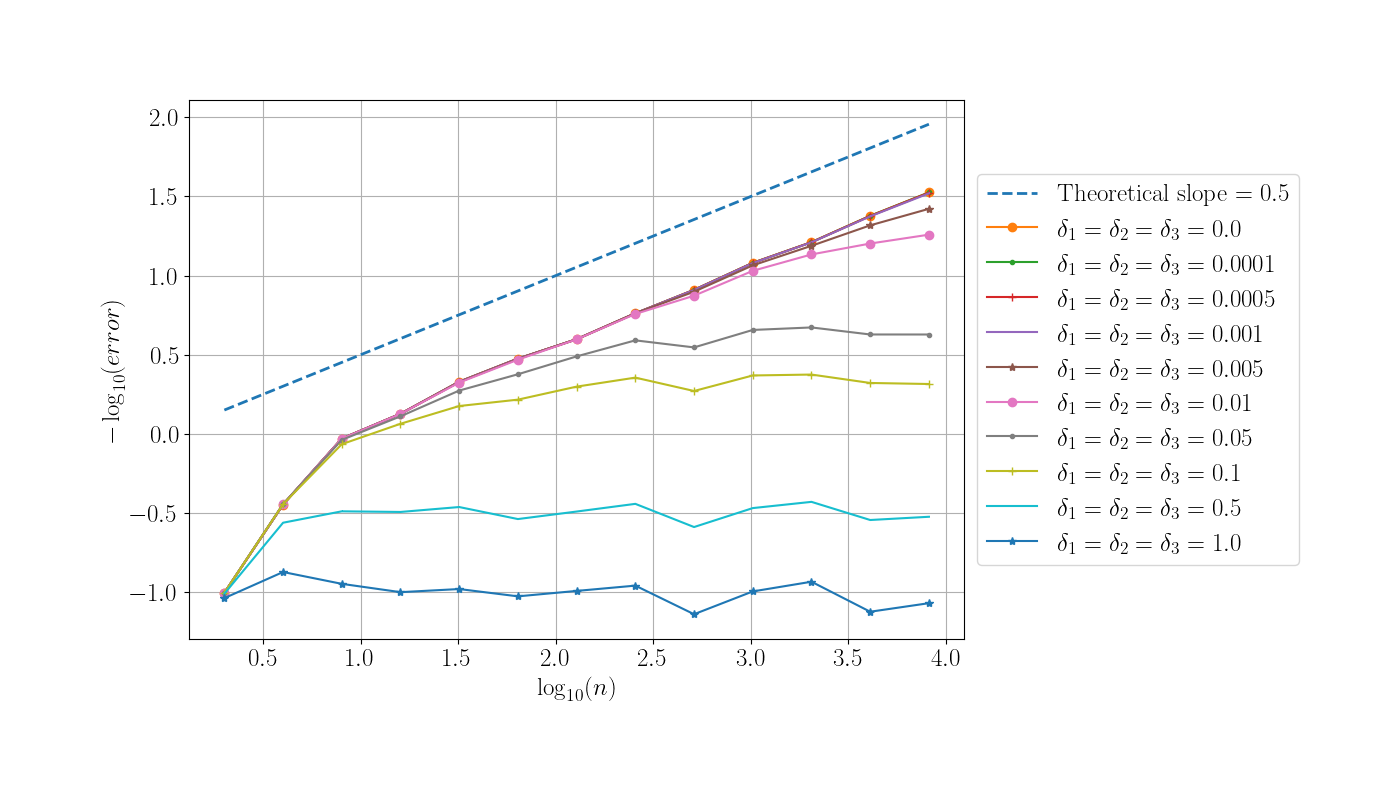}
    \caption{Log-log plot of the randomized error of the Euler algorithm for the Example 3 with $p_w$ as disruption of the Wiener process}
    \label{fig:example3:w0}
\end{figure*}

\subsection{Nonlinear disturbing function}
To conduct illustrative numerical simulations, as per Theorem \ref{theorem_error_bounds} (ii), we propose the following disruptive functions
 $$p_a(t,x)= \textrm{U}_1 \cdot a(t,x)$$ and $$p_b(t,x)= \textrm{U}_2 \cdot b(t,x),$$
 where $\textrm{U}_1,\textrm{U}_2$ is a random variable with a uniform distribution over the interval $[-1,1]$.

As a corrupting function for the Wiener process, we consider
\begin{equation}
\label{disruptive_wiener_ex3}
    p_{w, \beta}(t,x) =\textrm{U}_3 \cdot\text{sgn}(\sin({100\Vert x\Vert}))\cdot \vert\sin({100\Vert x\Vert})\vert^\beta,
\end{equation}
 where $\textrm{U}_3$ is a random variable with a uniform distribution over the interval $[-1,1]$. We also assume that $\textrm{U}_1$, $\textrm{U}_2$, $\textrm{U}_3$ are independent of $W$ and $\mathcal{G}^n$.

\begin{figure*}[h!]
    \centering
    \includegraphics[width=1.0\textwidth]{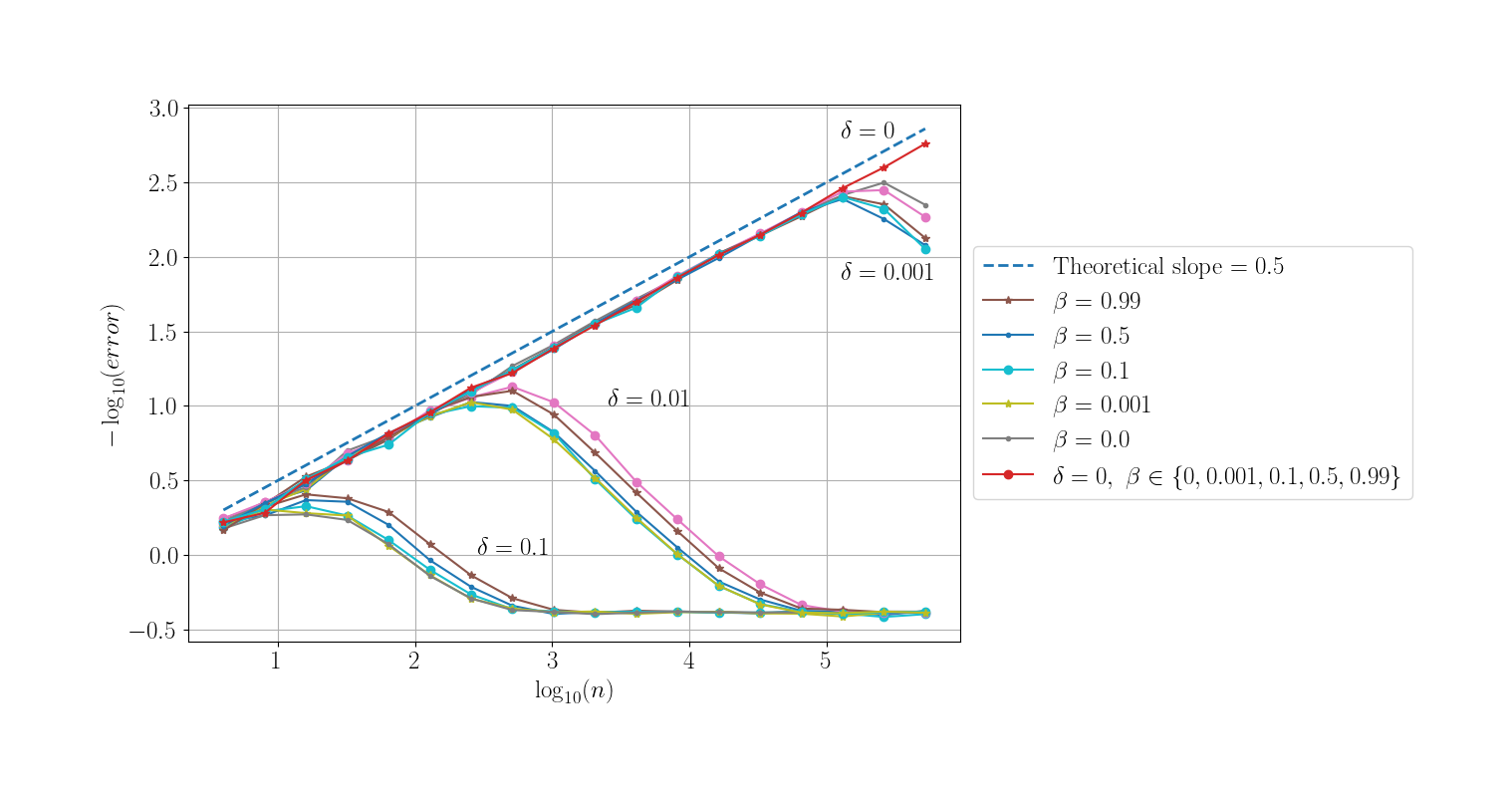}
    \caption{Example 1 with $\delta := \delta_1 = \delta_2 = \delta_3  \in\{0,\ 0.1,\ 0.01,\ 0.0001\}$ and $p_{w,\beta}$ disruption.}
    \label{fig:example1:wb}
\end{figure*}

\begin{figure*}[h!]
    \centering
    \includegraphics[width=1.0\textwidth]{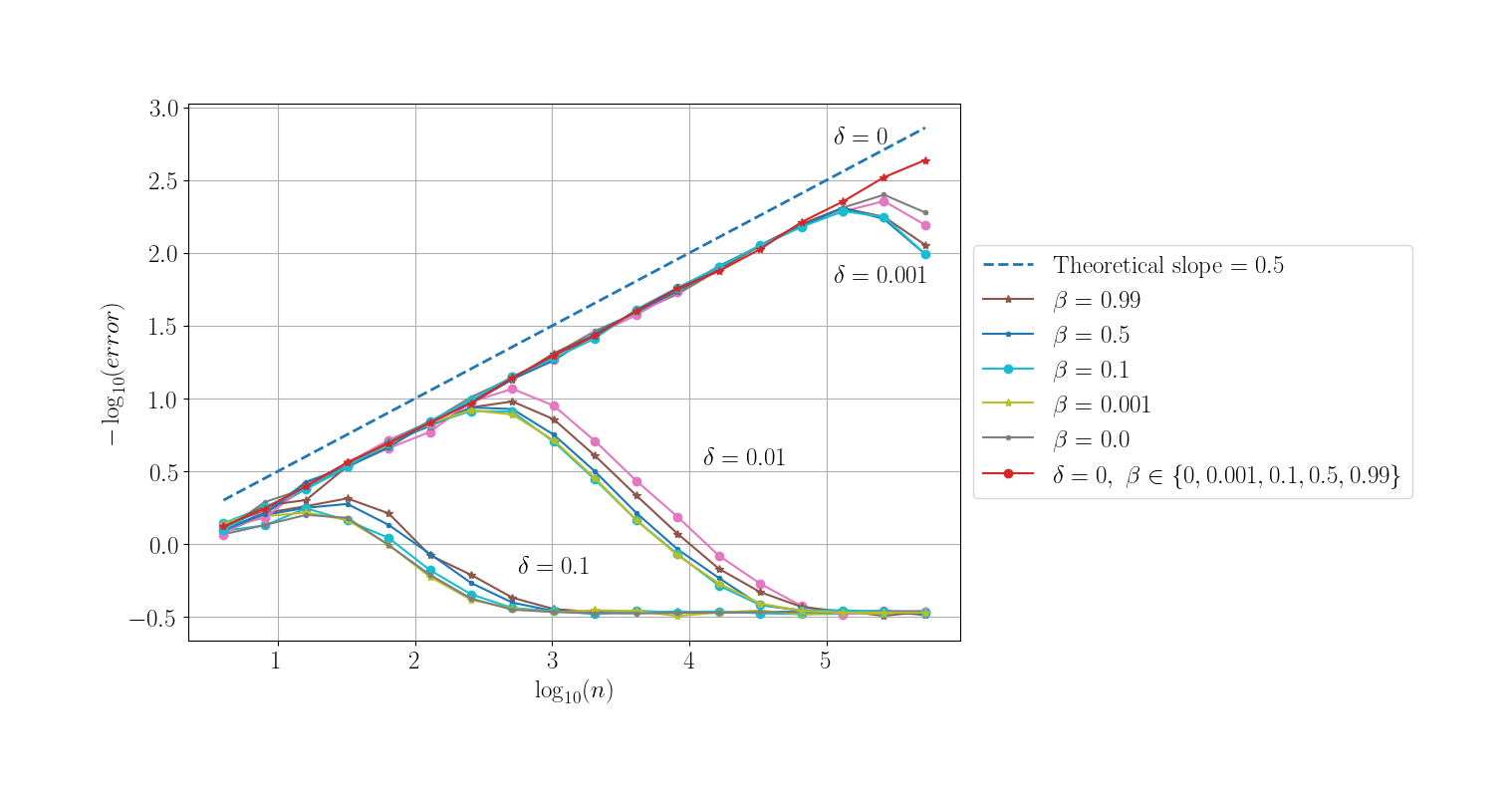}
    \caption{Example 2 with $\delta := \delta_1 = \delta_2 = \delta_3  \in\{0,\ 0.1,\ 0.01,\ 0.0001\}$ and $p_{w,\beta}$ disruption.}
    \label{fig:example2:wb}
\end{figure*}

\begin{figure*}[h!]
    \centering
    \includegraphics[width=1.0\textwidth]{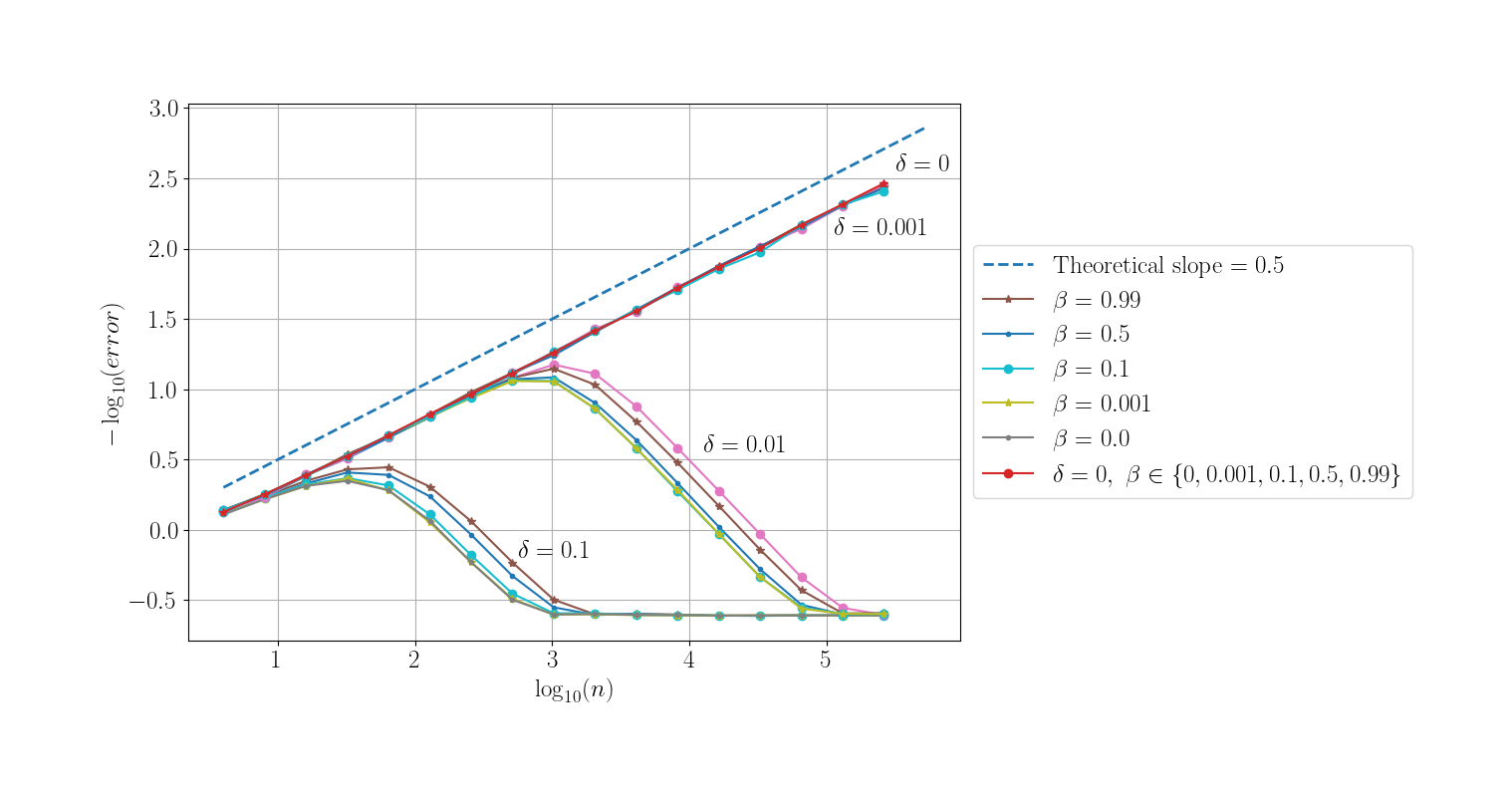}
    \caption{Example 3 with $\delta := \delta_1 = \delta_2 = \delta_3  \in\{0,\ 0.1,\ 0.01,\ 0.0001\}$ and $p_{w,\beta}$ disruption.}
    \label{fig:example3:wb}
\end{figure*}

\begin{figure}[h!]
\includegraphics[width=1.0\textwidth]{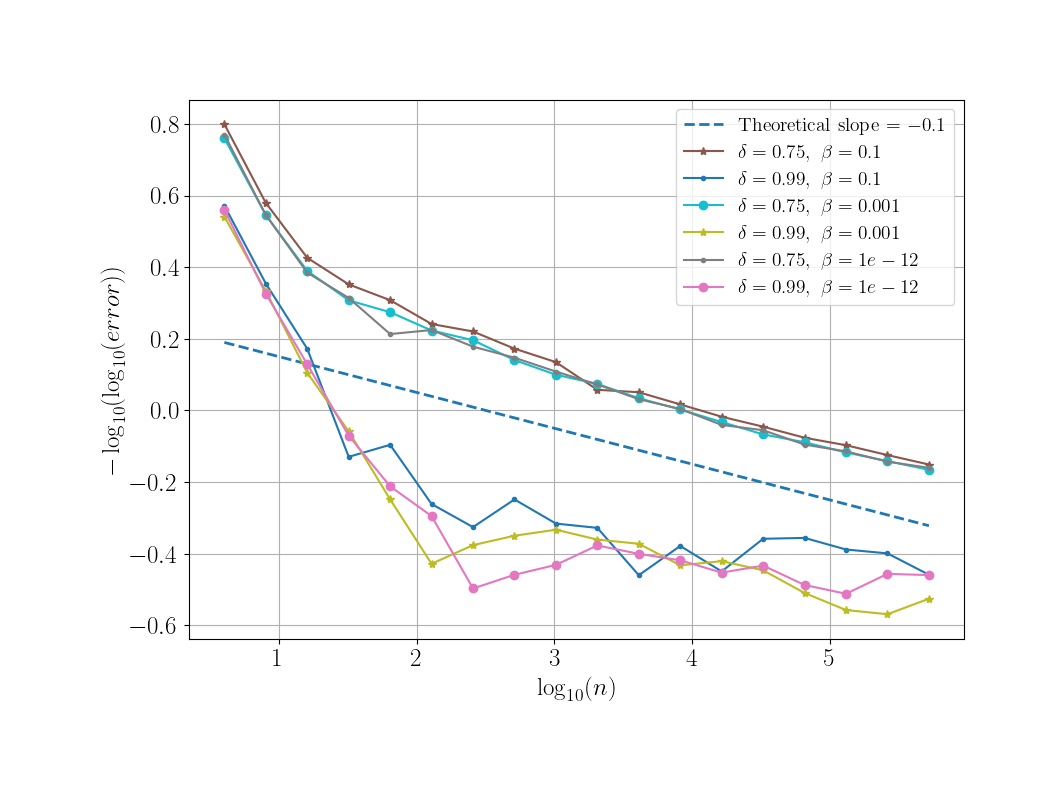}
    \caption{Log-log plot with a doubly logarithmic y-axis for 
Example 3 with \eqref{disruptive_wiener_ex3} as the disruptive function for the Wiener process, with $\delta=0.8,\ 0.9$.}
\label{example3_loglog}
\end{figure}

In Figures \ref{fig:example3:wb} and \ref{example3_loglog}, we present results for \eqref{disruptive_wiener_ex3} as the disruptive function for Wiener process in Example 3.

In this case the, logarithmic error exhibits exponential growth, necessitating the use of doubly logarithmic y-axis. Notably, such error behavior was not observed for disruptive functions from the class $\mathcal{K}_0$ for the Wiener process.




\section{Conclusions}
We have investigated the error and optimality of the randomized Euler scheme in the case when we have access only to noisy standard information about the coefficients $a$, $b$, and driving Wiener process $W$. We considered two classes of disturbed Wiener processes for which we derived upper error bounds for the randomized Euler algorithm. These bounds indicate that the error significantly depends on the regularity of the disturbing functions.
The numerical experiments demonstrate that beyond a certain value of $n$, which depends on the size of the disturbance, the error of the randomized Euler algorithm stabilizes, and increasing number of discretization points $n$ does not lead to reduction in error.

One particularly interesting observation is depicted in Figure \ref{example3_loglog}. When using function \eqref{disruptive_wiener_ex3} as a perturbation for the Wiener process with sufficiently high $\delta$, we observe an exponential increase in error as $n$ increases.

In future research, we plan to investigate the error of (multilevel) Monte Carlo method under inexact information for the weak approximation of solutions of SDEs.
\section{Appendix}
The proof of the following fact is straightforward and, therefore, omitted.
\begin{fact} If $p\in\mathcal{K}_0$ then for all $t\in [0,T],x,y\in\mathbb{R}^m$ it holds
\begin{equation}
    \|p(t,x)\|\leq m^{1/2}(1+\|x\|), \quad
    \|p(t,x)-p(t,y)\|\leq m^{1/2}\|x-y\|,
\end{equation}
\begin{equation}
\label{est_dpdy}
    \Bigl\|\frac{\partial p}{\partial y}(t,y)\Bigl\|\leq m^{1/2},
\end{equation}
\begin{equation}
\label{est_lpw}
    \|\mathcal{L}p(t,y)\| \leq \Bigl( 2m+\frac{m^2}{2}\Bigr)^{1/2}.
\end{equation}
\end{fact}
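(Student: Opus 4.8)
The plan is to prove each of the four estimates componentwise in $j=1,\dots,m$ and then reassemble the $m$ scalar bounds through the Frobenius norm; every $m^{1/2}$- or $m$-type factor in the statement is produced exactly at that reassembly step. The only analytic ingredients I would use are the fundamental theorem of calculus along straight-line segments, the defining pointwise bounds of the class $\mathcal{K}_0$ (namely $|p^j(0,0)|\le 1$, $|\partial p^j/\partial t|\le 1$, $\|\partial p^j/\partial y\|\le 1$, $\|\partial^2 p^j/\partial y^2\|\le 1$), and the Cauchy--Schwarz inequality.

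For the linear-growth bound I would fix $t\in[0,T]$, $x\in\mathbb{R}^m$ and $j$ and write $p^j(t,x)=p^j(0,0)+\int_0^1\bigl(t\cdot\tfrac{\partial p^j}{\partial t}(st,sx)+\tfrac{\partial p^j}{\partial y}(st,sx)\cdot x\bigr)\,\rd s$; bounding the integrand pointwise by $t+\|x\|$ gives $|p^j(t,x)|\le 1+t+\|x\|$, so squaring and summing yields $\|p(t,x)\|^2=\sum_{j=1}^m|p^j(t,x)|^2\le m(1+T+\|x\|)^2$, which is of the claimed form after the $T$-contribution is absorbed into the constant. The space-Lipschitz estimate is the same computation along the segment joining $y$ to $x$: for each $j$, $|p^j(t,x)-p^j(t,y)|\le\int_0^1\bigl\|\tfrac{\partial p^j}{\partial y}(t,y+s(x-y))\bigr\|\,\rd s\cdot\|x-y\|\le\|x-y\|$, whence $\|p(t,x)-p(t,y)\|^2\le m\|x-y\|^2$. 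Bound \eqref{est_dpdy} is immediate from the definition of the Jacobi matrix, whose $j$-th row is $\partial p^j/\partial y$, so that $\|\partial p/\partial y\|^2=\sum_{j=1}^m\|\partial p^j/\partial y\|^2\le m$.

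For \eqref{est_lpw} I would expand $\mathcal{L}p^j=\tfrac{\partial p^j}{\partial t}+\tfrac12\sum_{k=1}^m\tfrac{\partial^2 p^j}{\partial y_k^2}$, then use $(a+b)^2\le 2a^2+2b^2$ followed by Cauchy--Schwarz on the inner sum of length $m$: $|\mathcal{L}p^j|^2\le 2\bigl|\tfrac{\partial p^j}{\partial t}\bigr|^2+\tfrac{m}{2}\sum_{k=1}^m\bigl|\tfrac{\partial^2 p^j}{\partial y_k^2}\bigr|^2\le 2+\tfrac{m}{2}\bigl\|\tfrac{\partial^2 p^j}{\partial y^2}\bigr\|^2\le 2+\tfrac{m}{2}$. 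Summing over $j$ gives $\|\mathcal{L}p\|^2=\sum_{j=1}^m|\mathcal{L}p^j|^2\le m\bigl(2+\tfrac{m}{2}\bigr)=2m+\tfrac{m^2}{2}$, as claimed.

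I do not expect a genuine obstacle: the statement is bookkeeping on how the Frobenius norm sees componentwise gradient and Hessian bounds. The one spot calling for a little care is the precise constant $2m+m^2/2$ in \eqref{est_lpw} — one must split $\mathcal{L}p^j$ into its time part and its length-$m$ Laplacian part \emph{before} invoking Cauchy--Schwarz, rather than estimating $\operatorname{tr}(\partial^2 p^j/\partial y^2)$ in one step, since the latter route yields a marginally worse constant. (Note also that the linear-growth bound as literally written, with constant exactly $m^{1/2}$, tacitly uses $T\le 1$; in general one obtains $m^{1/2}(1+T+\|x\|)$, which is immaterial since $T$ is a parameter of the class.)
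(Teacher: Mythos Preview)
Your proposal is correct; the paper omits the proof entirely as ``straightforward,'' and your componentwise argument via the fundamental theorem of calculus, Cauchy--Schwarz, and reassembly through the Frobenius norm is exactly the natural execution. Your observation that the first bound as literally stated (with constant $m^{1/2}$ rather than $m^{1/2}(1+T)$) implicitly requires $T\le 1$ is also correct and is a genuine imprecision in the paper's statement---but, as you note, it is immaterial for the downstream applications.
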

In order to estimate absolute moments of $\bar X^{RE}_n(t_i)$, in the case when the disturbed Wiener process $\tilde W$ belongs to the class $\mathcal{W}_0(\delta_3)$, we use the following time-continuous randomized Euler process
\begin{equation}
	\label{EU_NO}
		\left\{ \begin{array}{ll}
			\tilde{\bar X}^{RE}_n(0)  & =   \eta, \\
			\tilde{\bar X}^{RE}_n(t)  & = \tilde{\bar X}^{RE}_n(t_i) + \tilde a(\xi_i, \tilde{\bar X}^{RE}_n(t_i)) \cdot (t-t_i) +  \tilde b(t_i, \tilde{\bar X}^{RE}_n(t_i))  \cdot (\tilde W(t)-\tilde W(t_i)) ,
		\end{array}\right.
\end{equation}
for $t\in [t_i,t_{i+1}]$, $i=0,1, \ldots, n-1$, where $\tilde W(t)= W(t)+\delta_3\cdot Z(t)$, $Z(t)=p_W(t, W(t))$, $p_W\in\mathcal{K}_0$. It is easy to see that $\tilde{\bar X}^{RE}_n(t_i)=\bar X^{RE}_n(t_i)$ for $i=0,1,\ldots,n$. Moreover, the process $(\tilde{\bar X}^{RE}_n(t))_{t\in [0,T]}$ is adapted to $(\tilde\Sigma^n_t)_{t\in [0,T]}$, which can be shown by induction.
\begin{lemma}
\label{est_cont_Euler}
Let $r\in [2,+\infty)$. There exists $C\in (0,+\infty)$, depending only on the parameters of the class $\mathcal{F}(\varrho,K)$ and $r$, such that for all $n\in\mathbb{N}$, $\delta_1,\delta_2,\delta_3\in [0,1]$, $(a,b,\eta)\in\mathcal{F}(\varrho,K)$, $(\tilde a,\tilde b, \tilde W)\in V_a(\delta_1)\times V_b(\delta_2)\times\mathcal{W}_0(\delta_3)$ it holds
    \begin{equation}
    \label{est_mom_b_1}
    \sup\limits_{0\leq t \leq T}\mathbb{E}\|\tilde{\bar X}^{RE}_{n}(t)\|^r\leq C(1+\delta_1^r+\delta_2^r+\delta_3^r)e^{CT(1+\delta_1^r+\delta_2^r+\delta_3^r)}.
\end{equation}
\end{lemma}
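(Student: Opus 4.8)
The plan is to rewrite $\tilde{\bar X}^{RE}_n$ in an integral form amenable to Burkholder and Gronwall arguments, isolating the extra terms produced by the disturbance $\delta_3 Z$ of the Wiener process. Denote by $\kappa(s)$ the index with $s\in[t_{\kappa(s)},t_{\kappa(s)+1})$ and set $\hat X(s)=\tilde{\bar X}^{RE}_n(t_{\kappa(s)})$; this piecewise-constant process is $(\tilde\Sigma^n_t)_{t\in[0,T]}$-adapted, as noted below \eqref{EU_NO}. Telescoping \eqref{EU_NO} gives, for $t\in[0,T]$,
\[
\tilde{\bar X}^{RE}_n(t)=\eta+\int_0^t \tilde a(\xi_{\kappa(s)},\hat X(s))\,\rd s+\int_0^t \tilde b(t_{\kappa(s)},\hat X(s))\,\rd\tilde W(s).
\]
Using $\tilde W=W+\delta_3 Z$ and the It\^o decomposition $Z=Z(0)+M+V$ from the proof of Proposition \ref{prop1}, with $\rd M^j(s)=\sum_i \frac{\partial p^j_W}{\partial y_i}(s,W(s))\,\rd W^i(s)$ and $\rd V^j(s)=\mathcal{L}p^j_W(s,W(s))\,\rd s$, one obtains $\tilde{\bar X}^{RE}_n(t)=\eta+I_1(t)+I_2(t)+I_3(t)+I_4(t)$, where $I_1$ is the drift integral of $\tilde a$, $I_2$ the It\^o integral of $\tilde b$ against $W$, $I_3=\delta_3\int_0^t \tilde b(t_{\kappa(s)},\hat X(s))\frac{\partial p_W}{\partial y}(s,W(s))\,\rd W(s)$, and $I_4=\delta_3\int_0^t \tilde b(t_{\kappa(s)},\hat X(s))\,\mathcal{L}p_W(s,W(s))\,\rd s$.

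Next I would bound $\mathbb{E}\|\tilde{\bar X}^{RE}_n(t)\|^r\le 5^{r-1}\bigl(\|\eta\|^r+\sum_{k=1}^4\mathbb{E}\|I_k(t)\|^r\bigr)$ term by term. From $\tilde a\in V_a(\delta_1)$, $\tilde b\in V_b(\delta_2)$, $\|p_a\|,\|p_b\|\le 1+\|y\|$, $\delta_1,\delta_2\le 1$, and \eqref{ling_g_b}, one has the linear-growth bounds $\|\tilde a(t,y)\|\le (K+1)(1+\|y\|)$ and $\|\tilde b(t,y)\|\le(\bar K+1)(1+\|y\|)$. For $I_1$ and $I_4$ apply the Jensen (H\"older) inequality in $s$, using $\|\mathcal{L}p_W\|\le(2m+m^2/2)^{1/2}$ from \eqref{est_lpw} for $I_4$. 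For the It\^o integrals $I_2$ and $I_3$ apply the Burkholder–Davis–Gundy inequality followed by Jensen, i.e. $\mathbb{E}\|\int_0^t G\,\rd W\|^r\le C\,t^{r/2-1}\int_0^t\mathbb{E}\|G(s)\|^r\,\rd s$, together with submultiplicativity of the Frobenius norm and $\|\frac{\partial p_W}{\partial y}\|\le m^{1/2}$ from \eqref{est_dpdy} for $I_3$. Since $T$ is fixed, this yields, with $C$ depending only on the class parameters and $r$,
\[
\mathbb{E}\|\tilde{\bar X}^{RE}_n(t)\|^r\le C\bigl(1+\delta_1^r+\delta_2^r+\delta_3^r\bigr)\Bigl(1+\int_0^t\mathbb{E}\|\hat X(s)\|^r\,\rd s\Bigr),\qquad t\in[0,T].
\]

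Finally, I would set $\phi(t)=\sup_{0\le u\le t}\mathbb{E}\|\tilde{\bar X}^{RE}_n(u)\|^r$. Since $t_{\kappa(s)}\le s$ we have $\mathbb{E}\|\hat X(s)\|^r\le\phi(s)$, and because the right-hand side above is nondecreasing in $t$ the inequality also holds with its left-hand side replaced by $\phi(t)$. The continuous Gronwall lemma then gives $\phi(t)\le C(1+\delta_1^r+\delta_2^r+\delta_3^r)e^{Ct(1+\delta_1^r+\delta_2^r+\delta_3^r)}$, which at $t=T$ is the claim. To make the Gronwall step rigorous one first notes that $\phi$ is finite on $[0,T]$ — the scheme is built in finitely many steps from increments of $\tilde W$ (Gaussian increments of $W$ plus increments of $p_W(\cdot,W(\cdot))$, which has linear growth), so all moments are finite — or alternatively localizes with the stopping times $\tau_N=\inf\{t:\|\tilde{\bar X}^{RE}_n(t)\|\ge N\}$ and lets $N\to\infty$ by monotone convergence.

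The main obstacle is not any single estimate but the bookkeeping around the enlarged filtration: one must use that $Z$ is a continuous semimartingale with respect to $(\tilde\Sigma^n_t)$ with the bounded, adapted integrands $\partial p_W/\partial y$ and $\mathcal{L}p_W$, that $\hat X$ is $(\tilde\Sigma^n_t)$-adapted so that $I_2,I_3$ are well-defined stochastic integrals, and that all appearing integrands lie in the relevant $L^r$ spaces. All of this is already available — the semimartingale decomposition and adaptedness from the proof of Proposition \ref{prop1} and the discussion following \eqref{EU_NO}, and the pointwise bounds on $p_W$, $\partial p_W/\partial y$, $\mathcal{L}p_W$ from the Fact in the Appendix — so the remaining work is exactly the routine term-by-term moment estimation sketched above.
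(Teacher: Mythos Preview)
Your proposal is correct and follows essentially the same route as the paper: integral representation of $\tilde{\bar X}^{RE}_n$ via step processes, It\^o decomposition of $Z$ into its $\rd W$ and $\rd s$ parts, term-by-term moment bounds using BDG/Jensen together with \eqref{est_dpdy}--\eqref{est_lpw}, and a continuous Gronwall argument applied to $\phi(t)=\sup_{0\le u\le t}\mathbb{E}\|\tilde{\bar X}^{RE}_n(u)\|^r$. The only organizational difference is that the paper first establishes finiteness of $\phi$ by a discrete induction on the grid points (rather than your localization alternative) before running Gronwall; both are equivalent ways to justify the same step.
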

\begin{proof} We denote by
\begin{equation}
    \bar V_i=(\xi_i, \tilde{\bar X}^{RE}_n(t_i)), \quad \bar U_i=(t_i, \tilde{\bar X}^{RE}_n(t_i)).
\end{equation}
Firstly, we show by induction that
\begin{equation}
\label{est_mom_ran_eu_1}
    \max\limits_{0\leq i\leq n}\mathbb{E}\|\tilde{\bar X}^{RE}_n(t_i)\|^r<+\infty.
\end{equation}
Let us assume that there exists $l\in\{0,1,\ldots,n-1\}$ such that $\max\limits_{0\leq i\leq l}\mathbb{E}\|\tilde{\bar X}^{RE}_n(t_i)\|^r<+\infty$. (This obviously holds for $l=0$.) Due to the fact that $\sigma(\bar U_l)\subset\tilde \Sigma^n_{t_l}$ and, by \eqref{est_lpw},
\begin{equation}
    \|V(t_{l+1})-V(t_{l})\|\leq\int\limits_{t_l}^{t_{l+1}}\|\mathcal{L}p_W(s,W(s))\|\rd s\leq c(m)\cdot (t_{l+1}-t_l),
\end{equation}
we get
\begin{eqnarray}
   && \mathbb{E}\|\tilde{\bar X}^{RE}_n(t_{l+1})\|^r\leq C  \mathbb{E}\|\tilde{\bar X}^{RE}_n(t_{l})\|^r+C(T/n)^r\cdot\mathbb{E}\|\tilde a(\bar U_l)\|^r\notag\\
   &&\quad\quad\quad\quad\quad\quad +\ C\mathbb{E}\|\tilde b(\bar U_l)\|^r\cdot\mathbb{E}\|W(t_{l+1})-W(t_{l})\|^r\\
   &&\quad\quad\quad\quad\quad+\ C \delta_3\mathbb{E}\Bigl\|\int\limits_{t_l}^t\tilde b(\bar U_l)\frac{\partial p_W}{\partial y}(s,W(s))\rd W(s)\Bigl\|^r\notag\\
   &&\quad\quad\quad\quad\quad\ + \delta_3\mathbb{E}\Bigl(\|\tilde b(\bar U_l)\|^r\cdot \|V(t_{l+1})-V(t_l)\|^r\Bigr)\leq K(1+\mathbb{E}\|\tilde{\bar X}^{RE}_n(t_{l})\|^r) <\infty.
\end{eqnarray}
Hence, $\displaystyle{\max\limits_{0\leq i\leq l+1}\mathbb{E}\|\tilde{\bar X}^{RE}_n(t_i)\|^r=\max\{\max\limits_{0\leq i\leq l}\mathbb{E}\|\tilde{\bar X}^{RE}_n(t_i)\|^r,\mathbb{E}\|\tilde{\bar X}^{RE}_n(t_{l+1})\|^r\}<+\infty}$ and the inductive step is completed. Hence, we have shown \eqref{est_mom_ran_eu_1}. Moreover, by \eqref{est_mom_ran_eu_1} we get
\begin{equation}
\label{est_mom_rand_eu_dep_n}
    \sup\limits_{0\leq t\leq T}\mathbb{E}\|\tilde{\bar X}^{RE}_n(t)\|^r\leq C\Bigl(1+\max\limits_{0\leq i\leq n-1}\mathbb{E}\|\tilde{\bar X}^{RE}_n(t_i)\|^r \Bigr)<+\infty
\end{equation}
The constant in \eqref{est_mom_rand_eu_dep_n} depends on $n$. In the second part of the proof we will
show that we can obtain the bound \eqref{est_mom_b_1} with $C$ that does not depend on $n$.

Let for $t\in [0,T]$
\begin{equation}
    \phi_n(t)=\sum\limits_{i=0}^{n-1}\tilde a(\bar V_i)\cdot\mathbf{1}_{(t_i,t_{i+1}]}(t),
\end{equation}
\begin{equation}
    \psi_n(t)=\sum\limits_{i=0}^{n-1}\tilde b(\bar U_i)\cdot\mathbf{1}_{(t_i,t_{i+1}]}(t).
\end{equation}
Note that $\{\psi_n(t)\}_{t\in [0,T]}$ is $\{\tilde\Sigma^n_t\}_{t\geq 0}$-progressively measurable simple process. Hence, we have for all $t\in [0,T]$ that
\begin{equation}
    \label{continous_decomposition}
    \tilde{\bar X}^{RE}_{n}(t)=\eta+\tilde{\bar A}^{RE}_{n}(t)+\tilde{\bar B}^{RE}_{n}(t)+\tilde{\bar C}^{RE}_{n}(t),
\end{equation}
where
\begin{eqnarray}
     && \tilde{\bar A}^{RE}_{n}(t)=\int\limits_0^t\phi_n(s)\rd s,\notag\\
     && \tilde{\bar B}^{RE}_{n}(t)=\int\limits_0^t\psi_n(s)\rd W(s),\notag
\end{eqnarray}
and
\begin{eqnarray}
     && \tilde{\bar C}^{RE}_{n}(t)=\delta_3\cdot\int\limits_0^t\psi_n(s)\rd Z(s)\notag\\
     &&\quad\quad\quad\quad=\delta_3\cdot\int\limits_0^t \psi_n(s)\frac{\partial p_W}{\partial y}(s,W(s))\rd W(s)+\delta_3\cdot\int\limits_0^t\psi_n(s)\mathcal{L}p_W(s,W(s))\rd s.
\end{eqnarray}
and all above stochastic integrals are well-defined. Hence, we have for $t\in [0,T]$
\begin{eqnarray}
     &&\tilde{\bar X}^{RE}_n(t)=\eta+\int\limits_0^t\Bigl[\phi_n(s)+\delta_3\cdot\psi_n(s)\cdot\mathcal{L}p_W(s,W(s))\Bigr]\rd s\notag\\
     &&+\int\limits_0^t\psi_n(s)\cdot\Bigl[I+\delta_3\cdot\frac{\partial p_W}{\partial y}(s,W(s))\Bigr]\rd W(s),
\end{eqnarray}
where $I$ is an identity matrix of size $m\times m$. Hence, by \eqref{est_dpdy}, \eqref{est_lpw}
\begin{eqnarray}
    &&\mathbb{E}\|\tilde{\bar X}^{RE}_n(t)\|^r\leq C_1\|\eta\|^r+C_2\mathbb{E}\int\limits_0^t\|\phi_n(s)\|^r\rd s+C_3(1+\delta_3^r)\mathbb{E}\int\limits_0^t\|\psi_n(s)\|^r\rd s\notag\\
    &&\leq K_1\cdot(1+\|\eta\|^r)\cdot (1+\delta_1^r+\delta_2^r+\delta_3^r)\notag\\
    &&\quad\quad\quad\quad+K_2\cdot(1+\delta_1^r+\delta_2^r+\delta_3^r)\cdot\int\limits_0^t\sum\limits_{i=0}^{n-1}\mathbb{E}\|\tilde{\bar X}^{RE}_n(t_i)\|^r\cdot\mathbf{1}_{(t_i,t_{i+1}]}(s)\rd s
\end{eqnarray}
and therefore for all $t\in [0,T]$
\begin{eqnarray}
    &&\sup\limits_{0\leq u\leq t}\mathbb{E}\|\tilde{\bar X}^{RE}_n(u)\|^r\leq K_1\cdot(1+\|\eta\|^r)\cdot (1+\delta_1^r+\delta_2^r+\delta_3^r)\notag\\
    &&+K_2\cdot (1+\delta_1^r+\delta_2^r+\delta_3^r)\int\limits_0^t\sup\limits_{0\leq u\leq s}\mathbb{E}\|\tilde{\bar X}^{RE}_n(u)\|^r\rd s
\end{eqnarray}
where $K_1$ and $K_2$ depends only on the parameters of the class $\mathcal{F}(\varrho,K)$ and $r$. Since the function $[0,T]\ni t\to \sup\limits_{0\leq u\leq t}\mathbb{E}\|\tilde{\bar X}^{RE}_n(u)\|^r$ is bounded (by \eqref{est_mom_rand_eu_dep_n}) and Borel measurable (as a nondecreasing function), by using the Gronwall's lemma we get \eqref{est_mom_b_1}.
\end{proof}
In the case of the class $\mathcal{W}_{\alpha,\beta}(\delta_3)$ we have the following absolute moments estimate for $\bar X_n^{RE}(t_i)$. The proof technique is different from the one used in the proof of Lemma \ref{est_cont_Euler}, since for $p_W\in\mathcal{K}_{\alpha,\beta}$ the process $Z(t)=p_W(t,W(t))$ might not be a semimartingale nor a process of bounded variation.
\begin{lemma}
    \label{est_cont_Euler2}
Let $r\in [2,+\infty)$. There exists $C\in (0,+\infty)$, depending only on the paramters of the class $\mathcal{F}(\varrho,K)$ and $r$, such that for all $n\in\mathbb{N}$, $\delta_1,\delta_2,\delta_3\in [0,1]$, $(a,b,\eta)\in\mathcal{F}(\varrho,K)$, $(\tilde a,\tilde b, \tilde W)\in V_a(\delta_1)\times V_b(\delta_2)\times\mathcal{W}_{\alpha,\beta}(\delta_3)$ it holds
    \begin{equation}
    \label{est_mom_b_2}
    \mathbb{E}\Bigl[\max\limits_{0\leq i \leq n}\|\bar X_n^{RE}(t_i)\|^r\Bigr]\leq C(1+\delta_3^r n^{r(1-\gamma)})\cdot e^{C(1+\delta_3^r n^{r(1-\gamma)})},
\end{equation}
where $\gamma=\min\{\alpha,\beta/2\}$.
\end{lemma}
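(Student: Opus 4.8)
The plan is to iterate the one-step recursion \eqref{EU_NO2} directly, controlling the running maximum of $\|\bar X_n^{RE}(t_i)\|^r$ via a discrete Gronwall argument, and to extract the factor $\delta_3^r n^{r(1-\gamma)}$ from the increments of the disturbed Wiener process. The key point, distinguishing this from Lemma \ref{est_cont_Euler}, is that $Z(t)=p_W(t,W(t))$ with $p_W\in\mathcal{K}_{\alpha,\beta}$ is \emph{not} a semimartingale, so the It\^o-based decomposition is unavailable; instead I would work with the raw increments $\Delta\tilde W_j = \Delta W_j + \delta_3\,\Delta Z_j$ and estimate $\|\Delta Z_j\|$ pointwise in $L^r(\Omega)$ using only the H\"older bound defining $\mathcal{K}_{\alpha,\beta}$. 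Since $\|\Delta Z_j\| \le (T/n)^\alpha + \|\Delta W_j\|^\beta$ and $\|\,\|\Delta W_j\|^\beta\,\|_r = \|\Delta W_j\|_{r\beta}^\beta \le C(T/n)^{\beta/2}$, we get $\|\,\|\Delta Z_j\|\,\|_r \le C n^{-\gamma}$ with $\gamma = \min\{\alpha,\beta/2\}$, which is where the exponent comes from.

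First I would establish, by induction on $i$ exactly as in the proof of Lemma \ref{est_cont_Euler}, that $\max_{0\le i\le n}\mathbb{E}\|\bar X_n^{RE}(t_i)\|^r < +\infty$ (with an $n$-dependent constant), so that all the moments manipulated below are finite; here one uses the linear growth bounds on $\tilde a,\tilde b$ inherited from $a\in\mathcal{A}_K$, $b\in\mathcal{B}_K^\varrho$ and the definitions of $V_a(\delta_1),V_b(\delta_2)$, together with the independence of $\bar X_n^{RE}(t_j)$ from $\Delta W_j$ and $\Delta Z_j$ (both being $\sigma(W)$-measurable and independent of $\mathcal{G}^n$). Next, writing $\bar X_n^{RE}(t_i) = \eta + \frac{T}{n}\sum_{j=0}^{i-1}\tilde a(\xi_j,\bar X_n^{RE}(t_j)) + \sum_{j=0}^{i-1}\tilde b(t_j,\bar X_n^{RE}(t_j))\cdot\Delta W_j + \delta_3\sum_{j=0}^{i-1}\tilde b(t_j,\bar X_n^{RE}(t_j))\cdot\Delta Z_j$, I would take $\max_{0\le i\le k}\|\cdot\|^r$, apply the elementary inequality $\|x_1+\cdots+x_4\|^r \le c_r(\|x_1\|^r+\cdots+\|x_4\|^r)$, and bound the four pieces separately: the drift term by Jensen ($n^{r-1}\cdot n^{-r}\sum$); the genuine It\^o-martingale term $\sum\tilde b(t_j,\cdot)\Delta W_j$ by the Burkholder and Jensen inequalities (using that $(\sum_{j\le k}\tilde b(t_j,\bar X_n^{RE}(t_j))\Delta W_j, \tilde\Sigma^n_{t_{k+1}})$ is a discrete martingale, as in the proof of Proposition \ref{prop1}), giving $\le \frac{C}{n}\sum_{j}\mathbb{E}(1+\|\bar X_n^{RE}(t_j)\|^r)$; and the $\eta$-term by $\|\eta\|\le K$.

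The crucial fourth term $\delta_3\sum_{j=0}^{i-1}\tilde b(t_j,\bar X_n^{RE}(t_j))\cdot\Delta Z_j$ I would estimate \emph{without} any martingale structure, simply by the triangle inequality: $\max_{1\le i\le n}\|\sum_{j=0}^{i-1}\tilde b(t_j,\cdot)\Delta Z_j\| \le \sum_{j=0}^{n-1}\|\tilde b(t_j,\bar X_n^{RE}(t_j))\|\cdot\|\Delta Z_j\|$, then raising to the $r$th power with Jensen to get $n^{r-1}\sum_{j}\|\tilde b(t_j,\cdot)\|^r\|\Delta Z_j\|^r$, taking expectations, and using independence of $\bar X_n^{RE}(t_j)$ from $\Delta Z_j$ together with the bound $\mathbb{E}\|\Delta Z_j\|^r \le C n^{-r\gamma}$ derived above. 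This yields a contribution $\le C\,\delta_3^r n^{r(1-\gamma)}\cdot\frac{1}{n}\sum_{j=0}^{n-1}\mathbb{E}(1+\|\bar X_n^{RE}(t_j)\|^r)$. Collecting everything, with $u_k := \mathbb{E}\max_{0\le i\le k}\|\bar X_n^{RE}(t_i)\|^r$, one arrives at a recursion of the form $u_k \le C_1(1+\delta_3^r n^{r(1-\gamma)}) + \frac{C_2}{n}(1+\delta_3^r n^{r(1-\gamma)})\sum_{j=0}^{k-1}u_j$, and the discrete Gronwall lemma (Lemma 2.1 in \cite{KRWU_0}) delivers $u_n \le C(1+\delta_3^r n^{r(1-\gamma)})\exp\!\big(C(1+\delta_3^r n^{r(1-\gamma)})\big)$, which is \eqref{est_mom_b_2}.

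The main obstacle is precisely that one cannot absorb the $\delta_3$-increment term into a martingale and hence cannot avoid the Jensen loss of $n^{r-1}$; this is what forces the factor $\delta_3^r n^{r(1-\gamma)}$ to appear both as a multiplicative prefactor and inside the exponential, and it is the honest reflection of the fact that under only H\"older-regular corruption of $W$ the accumulated noise genuinely need not be controllable better than by summing $n$ terms each of size $\delta_3 n^{-\gamma}$. A secondary technical point to handle carefully is the joint measurability/independence bookkeeping: one must check that $\Delta Z_j = p_W(t_{j+1},W(t_{j+1})) - p_W(t_j,W(t_j))$ is $\Sigma_\infty$-measurable and independent of $\mathcal{G}^n = \sigma(\xi_0,\dots,\xi_{n-1})$, and that $\bar X_n^{RE}(t_j)$ is $\tilde\Sigma^n_{t_j}$-measurable hence independent of the increments $\Delta W_j,\Delta Z_j$, so that expectations factorize; this is routine given the independence of $\Sigma_\infty$ and $\mathcal{G}^n$ assumed in the setup, but it is the place where the argument could silently go wrong if done too quickly.
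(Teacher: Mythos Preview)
Your approach matches the paper's proof almost exactly: same four-term decomposition, Jensen on the drift, Burkholder--Jensen on the genuine martingale part, crude triangle-inequality-plus-Jensen on the $\delta_3$-term, and discrete Gronwall to close. There is, however, one slip in your justification that you should correct. You write that $\bar X_n^{RE}(t_j)$ is independent of $\Delta Z_j$ and use this to factorize $\mathbb{E}\bigl[\|\tilde b(t_j,\bar X_n^{RE}(t_j))\|^r\|\Delta Z_j\|^r\bigr]$. This is false in general: $\Delta Z_j = p_W(t_{j+1},W(t_{j+1})) - p_W(t_j,W(t_j))$ depends on $W(t_j)$, which is $\tilde\Sigma^n_{t_j}$-measurable, so $\bar X_n^{RE}(t_j)$ and $\Delta Z_j$ share dependence on $W(t_j)$. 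The paper (and you, implicitly, since you already derived the pointwise bound) fixes this by first bounding $\|\Delta Z_j\| \le (T/n)^\alpha + \|\Delta W_j\|^\beta$ \emph{inside} the expectation and only then invoking the genuine independence of $\bar X_n^{RE}(t_j)$ from $\Delta W_j$. With that reordering your argument goes through unchanged and yields exactly the recursion you state.
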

\begin{proof}
    We have that  for $i=0,1,\ldots,n$ we can write that
\begin{eqnarray}
    &&\bar X^{RE}_n(t_i)=\eta+\frac{T}{n}\sum\limits_{j=0}^{i-1}\tilde a(\xi_j,\bar X^{RE}_n(t_j))+\ \sum\limits_{j=0}^{i-1}\tilde b(t_j,\bar X^{RE}_n(t_j))\cdot\Delta W_j\notag\\
    &&\quad\quad\quad\quad\quad\quad+\delta_3\sum\limits_{j=0}^{i-1}\tilde b(t_j,\bar X^{RE}_n(t_j))\cdot\Delta Z_j,
\end{eqnarray}
and we have that
\begin{equation}
    \|\bar X_n^{RE}(t_i)\|\leq K+\sum\limits_{j=0}^{i-1}\|\tilde A_j\|+\Bigl\|\sum\limits_{j=0}^{i-1}\tilde B_j\Bigl\|+\sum\limits_{j=0}^{i-1}\|\tilde C_j\|,
\end{equation}
where
\begin{eqnarray}
    &&\tilde A_j=\frac{T}{n}\tilde a(\xi_j,\bar X^{RE}_n(t_j)),\\
    &&\tilde B_j = \tilde b(t_j,\bar X^{RE}_n(t_j))\cdot\Delta W_j,\\
    &&\tilde C_j = \delta_3\cdot\tilde b(t_j,\bar X^{RE}_n(t_j))\cdot\Delta Z_j.
\end{eqnarray}
Hence, for all $k=0,1,\ldots, n$
\begin{eqnarray}
\label{lem2_est1}
    &&\mathbb{E}\Bigl(\max\limits_{0\leq i\leq k}\|\bar X_n^{RE}(t_i)\|^r\Bigr)\leq c_rK^r+c_r\mathbb{E}\Bigl(\sum\limits_{j=0}^{k-1}\|\tilde A_j\|\Bigr)^r\notag\\
    &&+c_r\mathbb{E}\Bigl[\max\limits_{1\leq i \leq k}\Bigl\|\sum\limits_{j=0}^{i-1}\tilde B_j\Bigl\|^r\Bigr]+c_r\mathbb{E}\Bigl(\sum\limits_{j=0}^{k-1}\|\tilde C_j\|\Bigr)^r.
\end{eqnarray}
By Jensen inequality we have that
\begin{equation}
\label{lem2_est2}
    \mathbb{E}\Bigl(\sum\limits_{j=0}^{k-1}\|\tilde A_j\|\Bigr)^r\leq C_1+\frac{C_1}{n}\sum\limits_{j=0}^{k-1}\mathbb{E}\|\bar X^{RE}_n(t_j)\|^r.
\end{equation}
From Burkholder and Jensen inequality we obtain that
\begin{equation}
\label{lem2_est3}
    \mathbb{E}\Bigl[\max\limits_{1\leq i \leq k}\Bigl\|\sum\limits_{j=0}^{i-1}\tilde B_j\Bigl\|^r\Bigr]\leq C_2+\frac{C_2}{n}\sum\limits_{j=0}^{k-1}\mathbb{E}\|\bar X^{RE}_n(t_j)\|^r.
\end{equation}
Finally, since $\bar X^{RE}_n(t_j)$ and $\Delta W_j$ are independent, and
\begin{equation}
    \| \|\Delta W_j\|^{\beta}\|_r\leq c (T/n)^{\beta/2},
\end{equation}
we get that
\begin{eqnarray}
    &&\mathbb{E}\|\tilde C_j\|^r\leq \bar K_1\delta_3^r\mathbb{E}\Bigl[(1+\|\bar X^{RE}_n(t_j)\|)^r\cdot \Bigl((T/n)^\alpha+\|\Delta W_j\|^{\beta}\Bigr)^r\Bigr]\notag\\
    &&\leq \bar K_2 \delta_3^r \Bigl(1+\mathbb{E}\|\bar X^{RE}_n(t_j)\|^r\Bigr)\cdot \Bigl\|(T/n)^\alpha+\|\Delta W_j\|^{\beta}\Bigl\|_r^r\notag\\
    &&\leq \bar K_3\delta_3^r n^{-r\gamma}+\bar K_4\delta^rn^{-r\gamma}\mathbb{E}\|\bar X^{RE}_n(t_j)\|^r,
\end{eqnarray}
and hence
\begin{eqnarray}
\label{lem2_est4}
        &&\mathbb{E}\Bigl(\sum\limits_{j=0}^{k-1}\|\tilde C_j\|\Bigr)^r\leq n^{r-1}\sum\limits_{j=0}^{k-1}\mathbb{E}\|\tilde C_j\|^r\leq C_3\delta_3^r n^{r(1-\gamma)}\notag\\
        &&+C_4\delta_3^r n^{r(1-\gamma)-1}\sum\limits_{j=0}^{k-1}\mathbb{E}\|\bar X^{RE}_n(t_j)\|^r.
\end{eqnarray}
Combining \eqref{lem2_est1}, \eqref{lem2_est2}, \eqref{lem2_est3}, \eqref{lem2_est4} we arrive at
\begin{eqnarray}
     &&\mathbb{E}\Bigl(\max\limits_{0\leq i\leq k}\|\bar X_n^{RE}(t_i)\|^r\Bigr)\leq C_5(1+\delta_3^rn^{r(1-\gamma)})\notag\\
     &&+C_6(\delta_3^rn^{r(1-\gamma)-1}+n^{-1})\sum\limits_{j=1}^{k-1}\mathbb{E}\Bigl(\max\limits_{0\leq i\leq j}\|\bar X_n^{RE}(t_i)\|^r\Bigr).
\end{eqnarray}
By the discrete version of the Gronwall's lemma we get the thesis.
\end{proof}

\bf Acknowledgements\rm \\
This research was realized as a part of joint research project between AGH UST and NVIDIA.

\end{document}